%% file: main.tex
\documentclass[11pt,final,3p,times,authoryear]{elsarticle}
\usepackage[T1]{fontenc}
\usepackage{amsmath, amsthm, amssymb}
\usepackage{graphicx}
\usepackage[colorlinks=true, urlcolor=blue, citecolor=black, linkcolor=black,hidelinks]{hyperref} 
\usepackage{subcaption}
\usepackage{cleveref}
\usepackage{diagbox}

\usepackage[authoryear]{natbib}
\usepackage{tikz}
\usetikzlibrary {arrows.meta}

\usepackage{todonotes}
\usepackage{comment}
\usepackage{float}
\usepackage{cleveref}
\usepackage{csquotes}
\usetikzlibrary{math}
\usetikzlibrary{decorations.pathreplacing}

\newtheorem{theorem}{Theorem}[section]

\DeclareMathOperator{\E}{E}
\DeclareMathOperator{\inte}{int}

\begin{document}

\begin{frontmatter}
    \title{Construction Methods for Space-Filling Heterogeneous Topological Interlocking Assemblies}

\author{Meike Weiß\textsuperscript{1} and Alice C. Niemeyer\textsuperscript{2}
\vspace{10pt}\\
Chair of Algebra and Representation Theory, RWTH Aachen University, Germany\\
\textsuperscript{1}weiss@art.rwth-aachen.de, \textsuperscript{2}alice.niemeyer@art.rwth-aachen.de
}

\begin{abstract}
    Deforming fundamental domains of wallpaper groups provides a systematic way to generate non‑convex blocks which admit topological interlocking assemblies (TIAs). We use this approach to construct TIAs that fully occupy the space between two parallel planes and incorporate multiple block types. In addition to wallpaper groups, semiregular tessellations are employed in the construction of such TIAs. These construction methods open up an extensive design space for TIAs, expanding the possibilities of feasible interlocking systems and creating new opportunities for architectural and material design. Several resulting block families can be interpreted as geometric realizations of generalized Truchet tiles or decorated lozenge tilings and, with suitable colouring rules, we establish a one‑to‑one correspondence between these tilings and specific TIAs. This framework enables a systematic investigation of symmetric and asymmetric assemblies derived from diverse block types.
\end{abstract}
\begin{keyword}
Topological Interlocking, Wallpaper Group, Semiregular Tilings, Truchet Tilings
\end{keyword}
\end{frontmatter}

\section{Introduction}
Building in a resource‑efficient manner is becoming increasingly important as sustainable construction practices gain prominence.
One well-known approach in this context is the use of modular and mortarless self-supporting designs, such as topological interlocking assemblies (TIAs).
An arrangement of blocks with a subset of blocks defined as the frame is a \emph{topological interlocking assembly} (TIA) if fixing the frame prevents every non-empty finite subset of blocks from moving.

The idea of topological interlocking assemblies can be traced back to the 18th century \citep{abeille_memoire_1735}. Its modern study has been initiated by~\cite{dyskin_new_2001,dyskin_toughening_2001} who introduced the concept as a novel material design.
The most familiar TIAs are arrangements of blocks placed between two parallel planes. Within this setting, two principal cases must be distinguished: assemblies of convex polyhedra (such as the Platonic solids) and assemblies of non-convex polyhedra. 
For convex polyhedra, neighbour blocks typically touch only along some parts of their faces, leaving gaps between adjacent blocks and failing to fill the entire space between the two parallel planes. 
For non-convex polyhedra, several construction methods exist that fully fill the space between the two parallel planes~\citep{Dyskin2003,VoroNoodles,TopologicalSymmetry,DelaunayLofts}. The advantage of these non-convex polyhedra is the easy assembly in practice, as each block intersects the parallel planes in a full face.
Among the existing construction methods, the approach of~\cite{TopologicalSymmetry}, which is based on wallpaper groups, offers a systematic way to construct TIAs with a certain symmetry.

While many existing TIAs rely on a single block type, assemblies composed of multiple distinct blocks offer a richer design space: they allow greater variability, new opportunities for architectural structures and a broader range of mechanical performance.
\cite{WEIZMANN201618} presented TIAs composed of various convex polyhedra. This raises the question of which TIAs composed of different block types can completely fill the space between two parallel planes. Examples for such non-convex blocks and their construction method are considered by \cite{Tubular}. 

In this paper, we introduce a comprehensive and systematic construction method for TIAs composed of different blocks that completely fill the space between two parallel planes. Our approach employs all feasible wallpaper groups and semiregular tessellations to generate TIAs consisting of one or more non‑convex block types. This framework enables a more systematic construction of blocks with both flat and curved interfaces than existing methods.
For each applicable wallpaper group, we provide explicit examples. In addition, we demonstrate that for certain wallpaper groups and specific deformations, the resulting assembly necessarily requires at least two different block types.
The presented framework of constructing TIAs composed of different blocks substantially extends the scope of existing constructions and demonstrates, through numerous new examples, the breadth of configurations made possible by our method based on~\cite{TopologicalSymmetry}.
Moreover, we show that certain TIAs are closely connected to Truchet tilings or decorated lozenge tilings. In particular, we interpret the blocks as geometric realisations of these tilings tailored to the interlocking assembly. By introducing a colour rule, we obtain a one-to-one correspondence between certain TIAs and the corresponding tilings.
This combinatorial framework allows us to investigate which symmetric and asymmetric assemblies of multiple block types exist.

\section{Preliminaries}

The two-dimensional \emph{Euclidean group} $\E(2)$ is the collection of all distance‑preserving transformations of the plane. It comprises translations, rotations, reflections and glide reflections of the Euclidean plane.
A discrete subgroup of the Euclidean group $\E(2)$ that contains two linearly independent translations is known as a \emph{wallpaper group}. 
Such groups capture the symmetries of planar periodic patterns, potentially involving rotations, reflections, and glide reflections. There exist exactly 17 distinct wallpaper groups, as shown by~\cite{armstrong1997groups}. 
Throughout this work, we use the crystallographic notation introduced in \cite{InternationalTablesA2002}.
A \emph{fundamental domain} of a wallpaper group is a smallest region of the plane whose images under the group's symmetries cover the entire Euclidean plane without overlapping interiors. 
In contrast, a \emph{translation cell} of a wallpaper group $G$ is the smallest region of the plane that tiles the entire plane solely through the two linearly independent translations of $G$.
In \Cref{fig:example_pattern} a pattern generated by the wallpaper group $p4$, featuring $90^\circ$ rotational symmetry, is shown together with an example of both a fundamental domain (blue) and a translation cell (red).

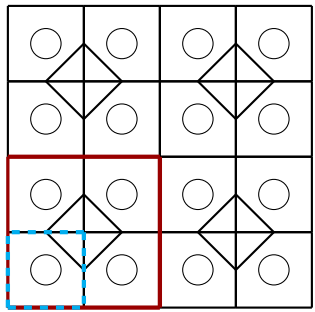
\begin{figure}[H]
    \centering
    \input{Example_pattern}
    \caption{$p4$ pattern with a fundamental domain coloured blue and a translation cell coloured red.}
    \label{fig:example_pattern}
\end{figure}

Fundamental domains can be transformed into other fundamental domains by applying the \emph{Escher Trick},  named after the Dutch artist M.C. Escher.
For this purpose, let $G$ be a wallpaper group and $F$ a fundamental domain of $G$. In general, the fundamental domain does not need to have straight edges, but we are assuming straight edges here to simplify the description.  
Observe that if the wallpaper group includes a reflection, some edge is fixed by this reflection. Any deformation of that edge would then have to be reflection‑invariant, which is not possible. Hence, the edge cannot be deformed.
First, we determine edge pairs by identifying which edges of $F$ are mapped to each other under the action of $G$. 
Depending on the choice of the fundamental domain, it may happen that more than two edges form an edge pair.
Assume $(e_1,e_2)$ to be one of the computed edge pairs. For $e_1=\{v_1,v_2\}$, we define an injective curve $\gamma_{e_1}:[0,1]\rightarrow\mathbb{R}^2$ satisfying $\gamma_{e_1}(v_i)=v_i$ for $i=1,2$. 
For example, $e_1$ can be deformed by inserting an intermediate point $p$ with a piecewise linear path from $v_1$ to $v_2$ that passes through $p$. Other piecewise linear choices are possible as well as smooth ones.
Given a deformation $\gamma_{e_1}$, the corresponding deformation of $e_2$ is uniquely prescribed by the symmetry constraints of the group $G$. Analogously, we define the deformations of the other edges where the images of all deformation curves under the action of $G$ may touch each other, but they may not intersect. By taking the orbits of the defined curves for all edge pairs, a new fundamental domain of the wallpaper group $G$ is obtained.

The left side of \Cref{fig:constr_versatile} shows a square with the vertices $v_1=(0.5,-0.5),\,v_2=(-0.5,-0.5),\,v_3=(-0.5,0.5)$ and $v_4=(0.5,0.5)$. This square acts as a fundamental domain for the wallpaper group $p4$.
To highlight the group’s symmetries, edges that are identified under the group action are shown in matching colours.
In this example, the point $p = (0,0)$ is the intermediate point describing the deformation of the edge $\{v_1, v_2\}$. The resulting piecewise linear deformation paths of the edges $\{v_1, v_2\}$ and $\{v_2, v_3\}$ are depicted as red dashed lines in \Cref{fig:constr_versatile}.
For the edge $\{v_1, v_4\}$, the point $p$ can also be used as a deformation point, since the resulting paths only touch and do not intersect.With this the deformation of $\{v_3, v_4\}$ is also determined. The described deformations transform the square into a rectangular fundamental domain. \Cref{fig:constr_versatile} visualizes this transition by showing the original square with the newly obtained rectangle.

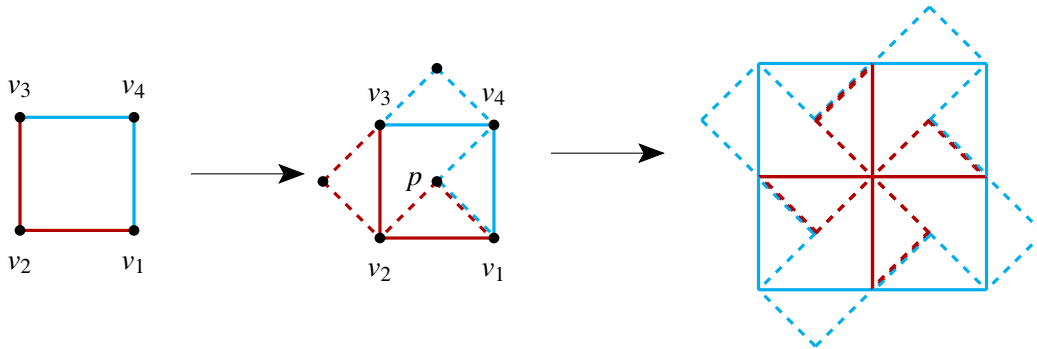
\begin{figure}[H]
    \centering
    \input{Construction_Versatile}
    \caption{Deformation of a square into a rectangle, each serving as a fundamental domain of the wallpaper group $p4$.}
    \label{fig:constr_versatile}
\end{figure}

This approach is used to construct space-filling assemblies of blocks whose arrangement follows the symmetries of the chosen wallpaper group~\citep{TopologicalSymmetry}.
Note that a family $(X_i)_{i\in I}$ of blocks, i.e.\ 3-dimensional solids, indexed by a countable set $I$, is called an \emph{assembly} if any two blocks intersect, at most, in their boundaries.
An assembly of blocks is called \emph{planar} if all blocks of the assembly lie between two parallel planes, with each block having at least one face in each plane. If the blocks touch one another in such a way that the entire space between the two parallel plane is filled, the assembly is said to be \emph{space-filling}.

As mentioned above, the Escher Trick can be applied to construct three-dimensional blocks which can be arranged according to the symmetries of the chosen wallpaper group. 
For this two fundamental domains constructed via the Escher trick for a wallpaper group $G$, are placed in two parallel planes to serve as the basis for constructing the desired blocks.
Interpolating between these two domains then yields a three‑dimensional block.
By applying the symmetries of the wallpaper group $G$, the block can be arranged to form a space-filling assembly. More details on this construction method are provided by~\cite{DissTom}.
The deformation illustrated in \Cref{fig:constr_versatile} yields the \emph{Versatile Block}, shown in \Cref{subfig:versatile} and originally introduced by~\cite{TopologicalSymmetry}. By construction, the Versatile Block can be assembled according to the symmetry of the wallpaper group $p1$, $pg$ and $p4$.

The wallpaper group together with the chosen deformation determines whether the resulting arrangement forms a topological interlocking assembly (TIA). 
A \emph{topological interlocking assembly} is an assembly of blocks $(X_i)_{i\in I}$ where a designated subset $(X_j)_{j \in J}$ (with $J \subset I$) serves as a \emph{frame} which is fixed. In such an assembly, any non‑empty finite subset of blocks is prevented from moving solely by the contact to the neighbour blocks and the frame.
The construction method outlined above offers a systematic framework for generating blocks that admit TIAs if no edge of the fundamental domain is fixed under the group action.
Consequently, the construction for blocks that admit TIAs is restricted to the groups $p1, p2, pg, p2gg, p3, p4$, and $p6$.
For instance, the $p4$ arrangement of the Versatile Block is a TIA, with the perimeter blocks defined as the frame (see \Cref{fig:InterlockingVersatile}).

\begin{figure}[H]
    \centering
    \begin{subfigure}{0.45\textwidth}
        \centering
        \includegraphics[scale=0.4]{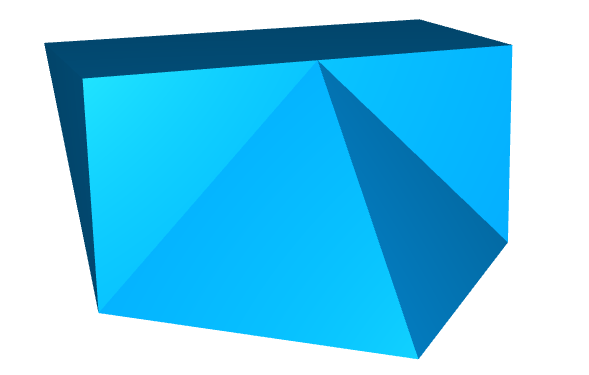}
        \caption{}
        \label{subfig:versatile}
    \end{subfigure}
    \begin{subfigure}{0.4\textwidth}
        \centering
        \includegraphics[scale=0.3]{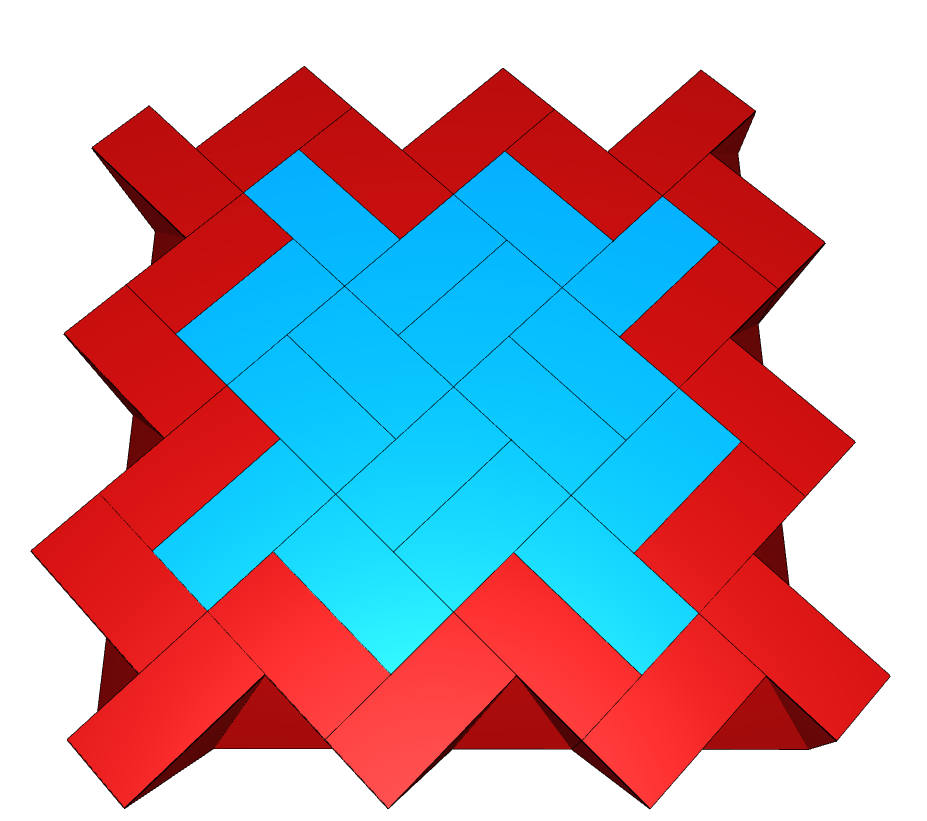}
        \caption{}
        \label{fig:InterlockingVersatile}
    \end{subfigure}
    \caption{The Versatile Block (a) and its TIA with $p4$ symmetry, shown with the frame coloured in red (b).}
    \label{fig:versatile}
\end{figure}

As shown by~\cite{bridges23}, the Versatile Block is closely connected to the concept of Truchet tilings. \cite{bridges23} establish that its geometry corresponds to a square tile divided along a diagonal into two right-angled triangles, one black and one white, referred to as a \emph{bi-triangular tile}.
As illustrated in \Cref{subfig:orient}, the orientation of the Versatile Block is determined by the underlying Truchet tile, with the black triangle corresponding to the block’s tip. Ensuring that adjacent Truchet tiles meet only at triangles with opposite colours guarantees a space-filling assembly of the Versatile Blocks. We refer to this requirement as the \emph{opposite‑colour adjacency} condition. The Truchet tiling corresponding to a $p4$ arrangement of the Versatile Block is shown in \Cref{subfig:p4Truchet}.

\begin{figure}[h!]
    \centering
    \begin{subfigure}{0.6\textwidth}
        \centering
        \hspace{-1.1cm}
        \input{bottom_left_tile.tex}
        \hspace{1.cm}
        \input{bottom_right_tile.tex}
        \hspace{1cm}
        \input{top_right_tile.tex}
        \hspace{1cm}
        \input{top_left_tile.tex}
        \vspace{0.2cm}

        \hspace{-1cm}
        \includegraphics[scale=0.14]{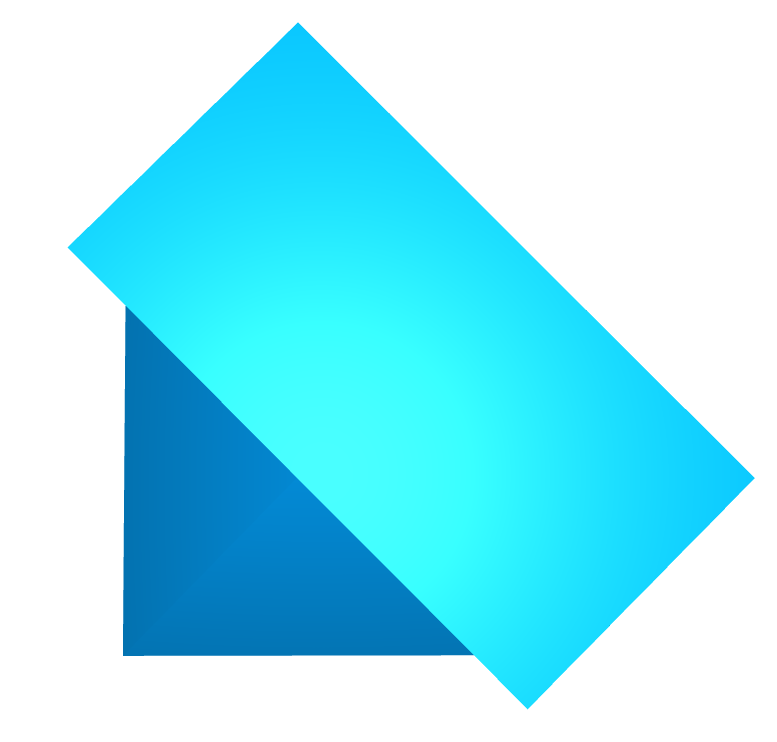}
        \includegraphics[scale=0.14]{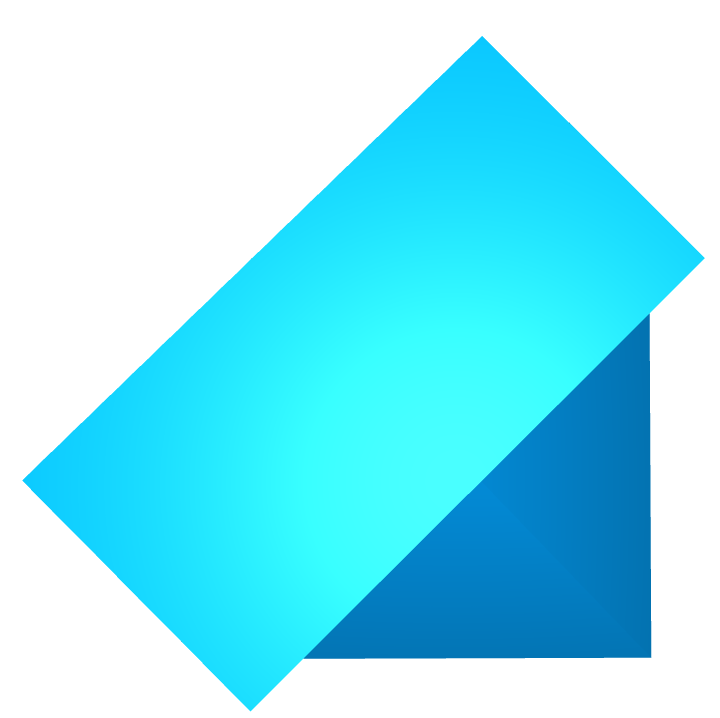}
        \hspace{0.2cm}
        \includegraphics[scale=0.14]{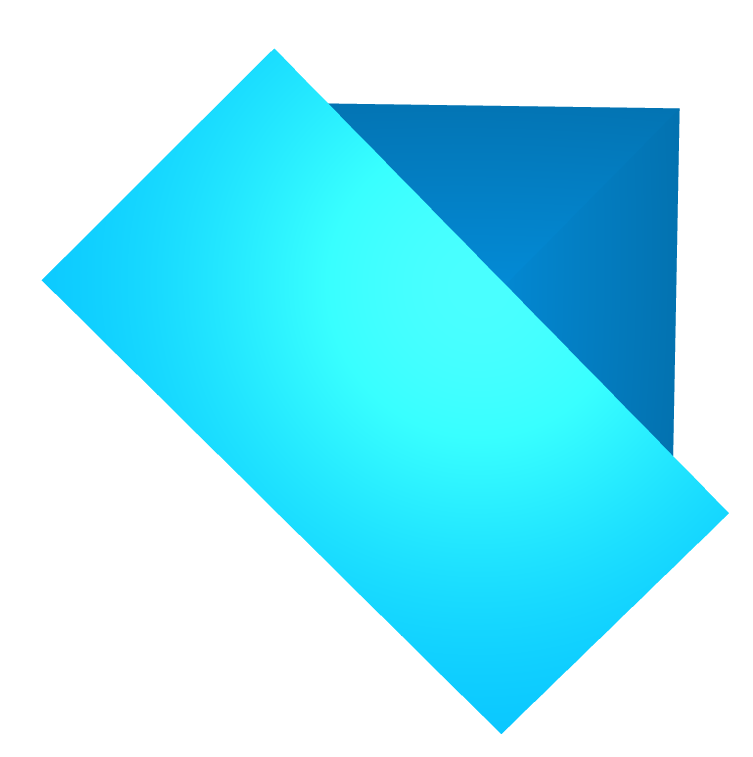}
        \hspace{0.5cm}
        \includegraphics[scale=0.14]{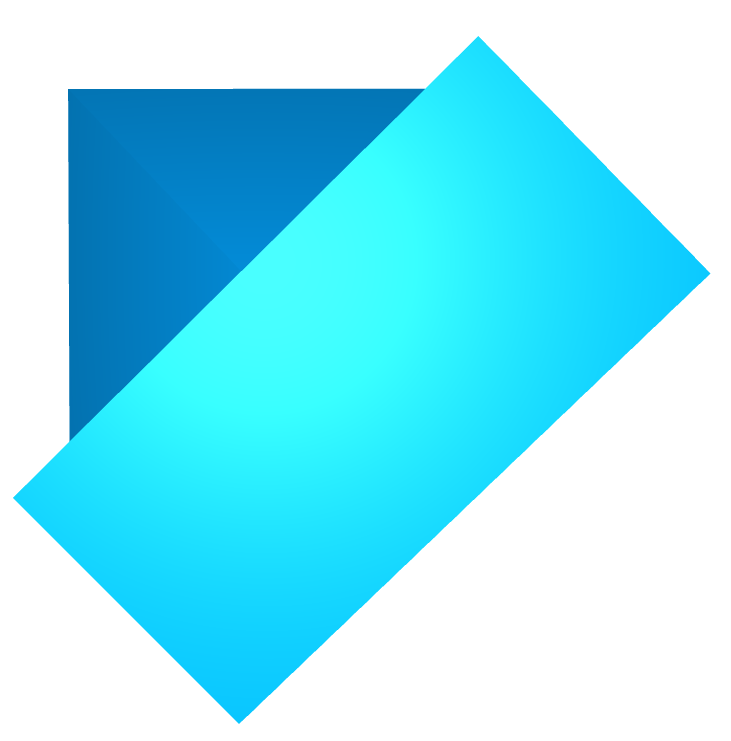}
        \caption{}
        \label{subfig:orient}
    \end{subfigure}
    \begin{subfigure}{0.3\textwidth}
        \centering
        \includegraphics[scale=0.32]{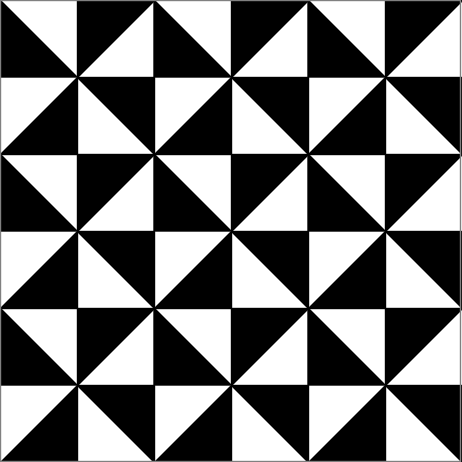}
        \caption{}    
        \label{subfig:p4Truchet}
    \end{subfigure}
    \caption{Correspondence between bi-triangular tile orientations and the Versatile Block geometry (a), and the Truchet tiling underlying the $6 \times 6$ arrangement of the Versatile Block with $p4$ symmetry (b).}
    \label{fig:versatile_ass}
\end{figure}

\section{Heterogenous planar assemblies based on the same wallpaper group}

In this section a methodology for constructing space‑filling assemblies composed of two distinct block types, which we refer to as \emph{heterogeneous} assemblies, is introduced.
Both block types are generated from the same wallpaper group via the Escher Trick and employ the same fundamental domain ensuring full geometric compatibility.

Let $G$ be a wallpaper group isomorphic to $p1, p2, pg, p2gg, p3, p4$ or $p6$, and let $F$ be a fundamental domain of $G$ with four boundary edges $e_1,\dots ,e_4$, which is possible for all groups in this list. The end vertices of the edge $e_i$ are $v_i$ and $v_{i+1}$ for $i\in\{1,\dots,4\}$ with $v_5:=v_1$. 
Each edge deformation is defined by an injective curve $\gamma_{e_i}:[0,1]\rightarrow\mathbb{R}^2$ satisfies $\gamma_{e_i}(0)=v_i$ and $\gamma_{e_i}(1)=v_{i+1}$.
The \emph{interior} of $F$ is defined as the region bounded by the edges $e_1,\dots,e_4$ without the edges itself and is denoted by $\inte(F)$.
With these definitions we can define two different types of edge deformation.
For this let $t_0=\inf\left\{t\in[0,1]\mid \gamma_{e_i}(t)\notin \{v_i+s*(v_{i+1}-v_i)\mid s\in\mathbb{R}\}\right\}$. Then $\gamma_{e_i}$ is called an \emph{inward edge deformation} if $\gamma_{e_i}(t_0)\in\inte(F)$. Analogously, $\gamma_{e_i}$ is called an \emph{outward edge deformation} if $t_0\notin\inte(F)$.
Moreover, we say that two edge deformations have the \emph{same type} if the two corresponding curves can be mapped onto each other by some isometry.

First of all, assume that $e_1$ and $e_2$ are identified by the action of $G$, i.e.\ $g(e_1)=e_2$ for some $g\in G$, as likewise $e_3$ and $e_4$. 
We define $\gamma_{e_1}$ to be an inward edge deformation which means that $\gamma_{e_2}$ is an outward edge deformation since then $e_1$ has to be mapped to $e_2$ by a rotation.
For the edge $e_3$, we choose between an inward and an outward deformation, with the deformation of $e_4$ determined by this choice.
Together with the deformation choice at $e_1$, this leads to two distinct fundamental domains.
If the chosen edge deformations of $e_1$ and $e_3$ are of the same type, one of the resulting fundamental domains exhibits rotational symmetry.
In the case where $e_1$ and $e_3$ are identified by $G$ (so $G$ contains no rotation) and likewise $e_2$ and $e_4$, the same approach yields two different fundamental domains.
However, depending on the symmetry of the chosen edge deformations and the symmetry of the group, the two resulting fundamental domains may coincide or differ only by a rotation or reflection.
When the construction yields two distinct fundamental domains, the corresponding blocks can be assembled into a heterogenous assembly. This works because both blocks employ the same pair of deformation types. We refer to one of the blocks as the \emph{obverse block} of the other.
Depending on the choice of $G$ and the symmetry of the deformation, both symmetric and asymmetric arrangements of the two blocks may occur.
If every edge pair is deformed and the outer blocks are chosen as the frame, the assembly of the resulting blocks forms a space‑filling heterogeneous topological interlocking assembly.

In this paper, we most often consider deformations of a fundamental domain where both edge pairs are deformed by the same type. Moreover, we mostly consider piecewise linear paths but all the presented constructions can be done similarly with smooth deformations.
We start by examining the $p4$ wallpaper group as a representative example. This case, together with the $p3$ case, is distinguished by the fact that it admits connections to combinatorial objects that are already well studied in the literature.

\subsection{Heterogenous assemblies based on \texorpdfstring{$p4$}{p4} symmetry}
Let $G$ be isomorphic to the wallpaper $p4$. As above, we again take the square with the vertices $v_1=(0.5,-0.5),\,v_2=(-0.5,-0.5),\,v_3=(-0.5,0.5)$ and $v_4=(0.5,0.5)$ as the fundamental domain. We label its edges by $e_1=\{v_1,v_2\},\, e_2=\{v_2,v_3\},\, e_3=\{v_3,v_4\} \text{ and } e_4=\{v_1,v_4\}$.
For the Versatile Block the edges $e_1$ and $e_4$ are deformed with the intermediate point $(0,0)$, as depicted in \Cref{fig:constr_versatile}.Thus, $e_1$ and $e_4$ are both deformed by inward edge deformations.
Suppose now that we deform $e_4$ outwards with the same deformation type, then this deformation results in a new fundamental domain for $p4$ consisting of two small squares.
The two compatible deformations of the square described above together with the used deformation curve defined by the intermediate point $p$ are illustrated in \Cref{fig:def}.

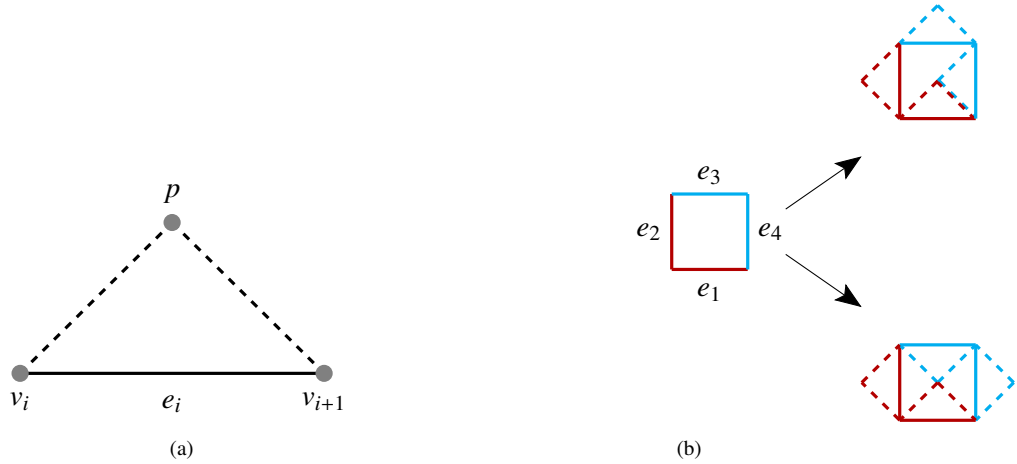
\begin{figure}[H]
    \centering
    \begin{subfigure}{0.3\textwidth}
        \centering
        \input{Deformation}
        \caption{}
        \label{subfig:deformation}
    \end{subfigure}
    \begin{subfigure}{0.5\textwidth}
        \centering
        \input{Different_Deformations}
        \caption{}
        \label{subfig:diff_deformation}
    \end{subfigure}
    \caption{Deformation of an edge (a) with the two resulting deformations (illustrated with dashed edges) of the square both respecting the $p4$ symmetry (b).}
    \label{fig:def}
\end{figure}

Placing the two squares of the deformed fundamental domain in the plane parallel to the $xy$-plane at height $z=1$ and interpolating between them yields the so‑called \emph{Bisquare Block}, shown in \Cref{fig:bisquare}. This block was originally introduced by~\cite{frezier1737}.
The Bisquare Block has 11 vertices with the coordinates
$$\left(-\frac{\sqrt{2}}{2},\frac{\sqrt{2}}{2},0\right),\left(\frac{\sqrt{2}}{2},\frac{\sqrt{2}}{2},0\right),\left(\frac{\sqrt{2}}{2},-\frac{\sqrt{2}}{2},0\right),\left(-\frac{\sqrt{2}}{2},-\frac{\sqrt{2}}{2},0\right),$$
$$\left(-\frac{\sqrt{2}}{2},\frac{\sqrt{2}}{2},1\right),\left(0,\sqrt{2},1\right),\left(\frac{\sqrt{2}}{2},\frac{\sqrt{2}}{2},1\right),
\left(\frac{\sqrt{2}}{2},-\frac{\sqrt{2}}{2},1\right),\left(0,-\sqrt{2},1\right),\left(-\frac{\sqrt{2}}{2},-\frac{\sqrt{2}}{2},1\right),\left(0,0,1\right)$$ such that the fundamental domain has an area of 2.
The triangulation of the Bisquare Block is determined by the following vertex-face incidences, where $1,\dots,11$ represent the vertices in the order above:
$$[1,2,4],[2,3,4],[5,6,7],[5,7,11],[8,9,10],[8,10,11],[4,9,10],[3,8,9],[3,4,9],[1,4,11],[1,5,11],$$
$$[4,10,11],[2,6,7],[1,5,6],[2,3,11],[2,7,11],[3,8,11],[1,2,6].$$

\begin{figure}[h!]
    \centering
    \includegraphics[scale=0.6]{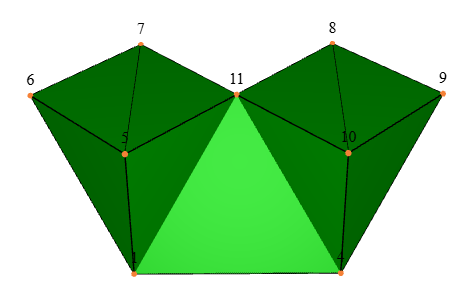}
    \includegraphics[scale=0.6]{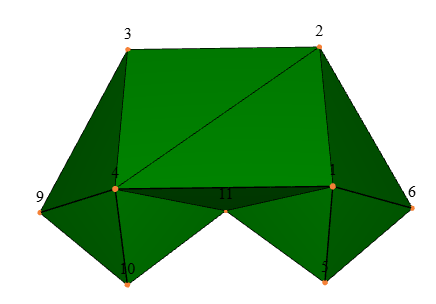}
    \caption{Two different views of the Bisquare Block.}
    \label{fig:bisquare}
\end{figure}

A special property of the Versatile and Bisquare Blocks is that their construction employs a symmetric deformation in which the intermediate point is obtained by shifting the midpoint of the edge along the direction perpendicular to the edge. 
This explains why both blocks possess at least one axis of symmetry. Note that the Bisquare Block exhibits an additional axis, as its edges were deformed symmetrically: each edge deformation is obtained from the others by a rotation.
Due to this special property, both blocks can be represented by generalized Truchet tiles. As described above, the Versatile Block corresponds to a bi‑triangular tile. Owing to its higher symmetry, the Bisquare Block can be represented by a \emph{quad‑triangular tile}, that is, a square subdivided into four triangles in which the two triangles not sharing an edge are assigned the same colour.
The quad‑triangular tile, together with the encoding of the Bisquare Block’s orientation, is shown in \Cref{fig:encoding_Bisquare}.
In contrast to the Versatile Block and the bi‑triangular tile, only two distinct orientations occur for the Bisquare Block and the quad-triangular tile, since the remaining ones coincide by symmetry.

\vspace{-0.3cm}
\begin{figure}[H]
    \centering
    \begin{subfigure}{0.5\textwidth}
        \begin{minipage}{0.3\textwidth}
            \input{top_bottom}
        \end{minipage}
        \hspace{-0.4cm}
        $\leftrightarrow$
        \hspace{0.2cm}
        \begin{minipage}{0.4\textwidth}
            \includegraphics[scale=0.25]{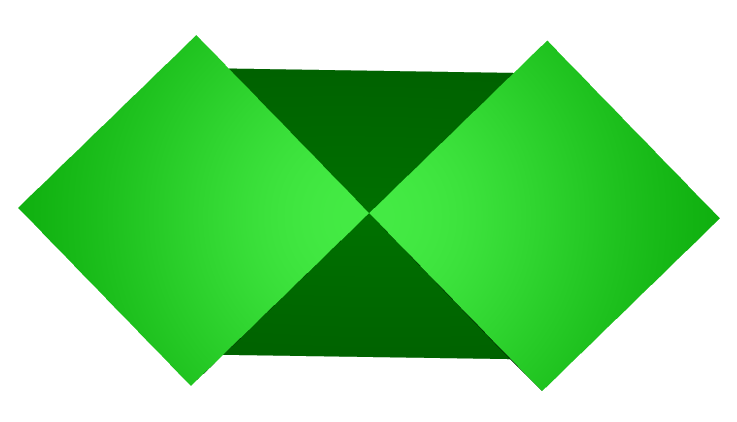}
        \end{minipage}
    \end{subfigure}
    \hspace{0.4cm}
    \begin{subfigure}{0.4\textwidth}
        \begin{minipage}{0.3\textwidth}
            \input{left_right}
        \end{minipage}
            $\leftrightarrow$
            \hspace{0.2cm}
        \begin{minipage}{0.4\textwidth}
            \includegraphics[scale=0.25]{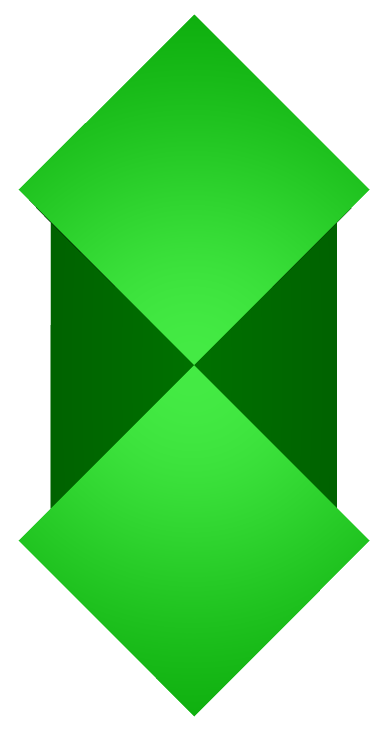}
        \end{minipage}
    \end{subfigure}
    \caption{Correspondence between the orientations of the quad-triangular tiles and the Bisquare Block.}
    \label{fig:encoding_Bisquare}
\end{figure}

A tiling composed of these quad-triangular tiles corresponds to an assembly of Bisquare Blocks if and only if the opposite-colour adjacency condition is satisfied, meaning that only triangles of different colours may touch along an edge. This leads to the following statement.

\begin{theorem}
    There exist exactly two $n\times m$ assemblies of the Bisquare Block, or equivalently, exactly two $n\times m$ square tilings composed solely of quad‑triangular tiles for $n,m\in\mathbb{N}$. These two assemblies (or tilings) exhibit $p4$ symmetry.
\end{theorem}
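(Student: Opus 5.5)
The plan is to work purely on the tiling side, using the correspondence stated just before the theorem: an $n\times m$ assembly of Bisquare Blocks corresponds to an $n\times m$ tiling by quad-triangular tiles satisfying the opposite-colour adjacency condition. Each quad-triangular tile has exactly two orientations (\Cref{fig:encoding_Bisquare}). In one, call it $H$, the top and bottom triangles are black and the left and right triangles are white. In the other, $V$, the colours are swapped. So a tiling is a function $\sigma:\{1,\dots,n\}\times\{1,\dots,m\}\to\{H,V\}$, and we must count the functions satisfying the adjacency condition.

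First I treat a horizontal neighbour pair, tile $(i,j)$ and tile $(i,j+1)$. They share the right triangle of the left tile and the left triangle of the right tile. In a single orientation, the left and right triangles have the same colour: white for $H$, black for $V$. The condition that the two touching triangles have different colours therefore forces $\sigma(i,j)\neq\sigma(i,j+1)$. The same argument for vertical neighbours, where top and bottom triangles again share a colour within one orientation, gives $\sigma(i,j)\neq\sigma(i+1,j)$.

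Thus admissible tilings are exactly the proper 2-colourings of the $n\times m$ grid graph. This graph is connected and bipartite, with bipartition given by the parity of $i+j$. It therefore has exactly two proper 2-colourings: $\sigma(i,j)=H$ if and only if $i+j$ is even, and its complement. Both are realised, because alternating orientations satisfy every local constraint. This yields exactly two tilings, and hence exactly two assemblies.

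For the symmetry claim, I would take the checkerboard tiling and extend it to the infinite plane. I would then check that a $90^\circ$ rotation about a vertex shared by four tiles maps it to itself, which is clear because a rotation by $90^\circ$ swaps $H$ and $V$ and also permutes the four tiles around the vertex appropriately. Translations by two tile-widths also preserve it. This gives $p4$ symmetry, and the corresponding Bisquare assembly is the $p4$ orbit of the deformed fundamental domain. The second tiling is the image of the first under a translation by one tile, so it is the same $p4$ pattern.

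The main obstacle is not the counting but justifying the "if and only if" correspondence, namely that opposite-colour adjacency on triangles is exactly equivalent to the blocks fitting space-fillingly. I would take this as established by the preceding paragraph of the paper. The only care needed is checking that each orientation has equal colours on opposite sides, which is what forces alternation.
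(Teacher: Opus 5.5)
Your proof is correct and follows essentially the paper's argument. Both rest on the observation that neighbouring quad-triangular tiles must have opposite orientations. The paper then propagates the choice of the top-left tile row by row, while you package that same propagation as the count of proper $2$-colourings of the connected bipartite grid graph, which has the small advantage of making the existence of both tilings and their $p4$ invariance explicit rather than implicit.
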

\begin{proof}
    First, note that the orientation of any quad-triangular tile is uniquely determined by the orientation of a single neighbouring tile and that neighbouring tiles always have opposite orientations. The last property already induces the $p4$ symmetry of the quad-triangular tiling.
    
    We begin with the top‑left square of the tiling, where the orientation of the quad‑triangular tile may be chosen freely, yielding two possible options. Fixing an orientation of the first tile determines the orientations of the quad-triangular tiles in the entire first row and also the leftmost entry of the second row. The second tile in the second row has two neighbours with a fixed orientation, and by construction they have different colours at the edges adjacent to the tile under consideration. This determines a unique feasible orientation for the tile. The same reasoning propagates across the entire tiling, ensuring that every subsequent orientation is uniquely determined. Consequently, each initial choice for the first tile produces a unique $n\times m$ tiling. Consequently, there are exactly two valid tilings with $p4$ symmetry, and the corresponding result for the arrangements of Bisquare Blocks follows immediately.
\end{proof}

One of the possible two $5\times 5$ assemblies of Bisquare Blocks together with the corresponding tiling composed of quad-triangles is depicted in \Cref{fig:ass_bisquare}.
Note that the two possible tilings composed of quad-triangles are $90^\circ$ rotations of each other, as are the corresponding two assemblies of the Bisquare Block.

\begin{figure}[H]
     \begin{subfigure}{0.45\textwidth}
        \centering
        \includegraphics[scale=0.35]{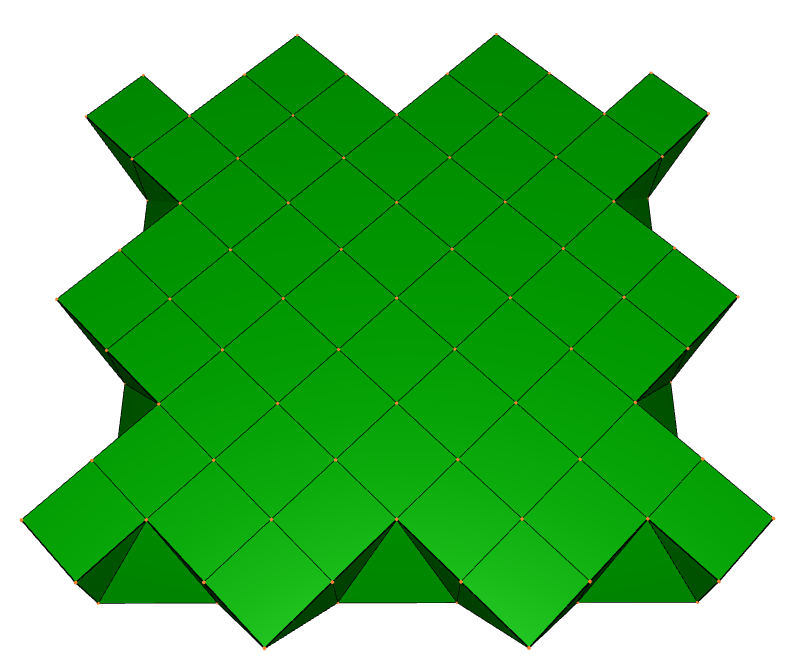}
        \caption{}    
    \end{subfigure}
    \begin{subfigure}{0.45\textwidth}
        \centering
        \includegraphics[scale=0.45]{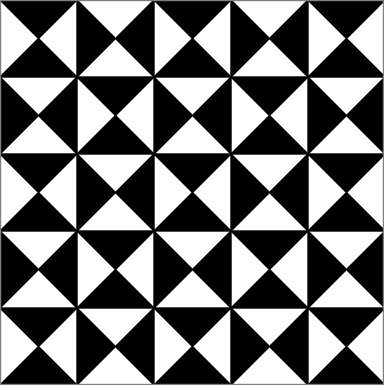}
        \caption{}    
    \end{subfigure}
    \caption{A $5\times 5$ assembly of the Bisquare Block (a) and the corresponding quad-triangular tiling (b).}
    \label{fig:ass_bisquare}
\end{figure}

Since the deformations of the Versatile and Bisquare Block are compatible, they can be arranged within a heterogenous assembly. However, the question remains which types of arrangements are actually possible. 
For this purpose, we identify possible arrangements using square tilings composed of bi- and quad-triangular tiles. Such a tiling corresponds to a valid heterogenous assembly of the Versatile and the Bisquare Block if and only if the opposite-colour adjacency condition is satisfied.
Taking the possible orientations of the tiles into account, these tilings consist of six distinct decorations of the squares.
\Cref{fig:possTilings} shows three examples of tilings composed of bi‑ and quad‑triangular tiles and the corresponding heterogenous assemblies. Besides periodic patterns, aperiodic random tilings are also possible, see \Cref{subfig:aperiodic}.

\begin{figure}[h!]
    \centering
    \hspace{0.3cm}
    \begin{subfigure}{0.3\textwidth}
        \centering
        \includegraphics[scale=0.34]{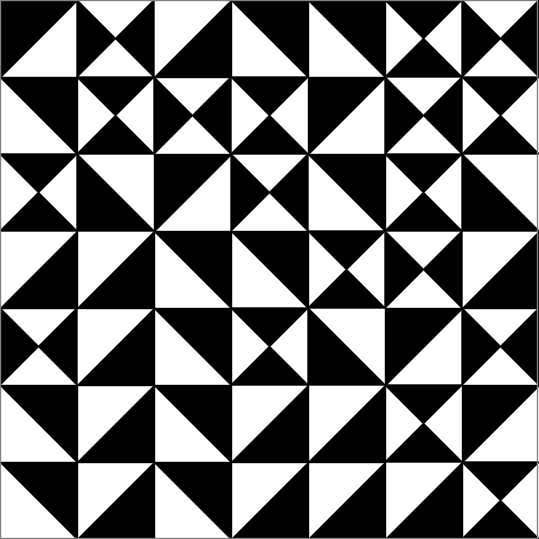}
        \caption{}
        \label{subfig:aperiodic}
    \end{subfigure}
    \hspace{0.4cm}
    \begin{subfigure}{0.3\textwidth}
        \centering
        \includegraphics[scale=0.34]{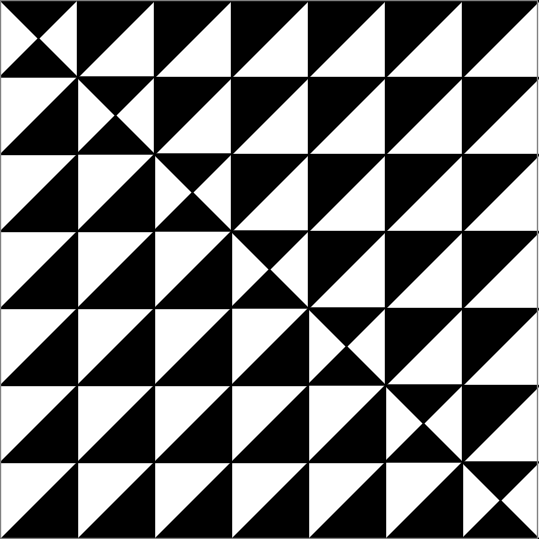}
        \caption{}    
    \end{subfigure}
    \hspace{0.5cm}
    \begin{subfigure}{0.3\textwidth}
        \centering
        \includegraphics[scale=0.34]{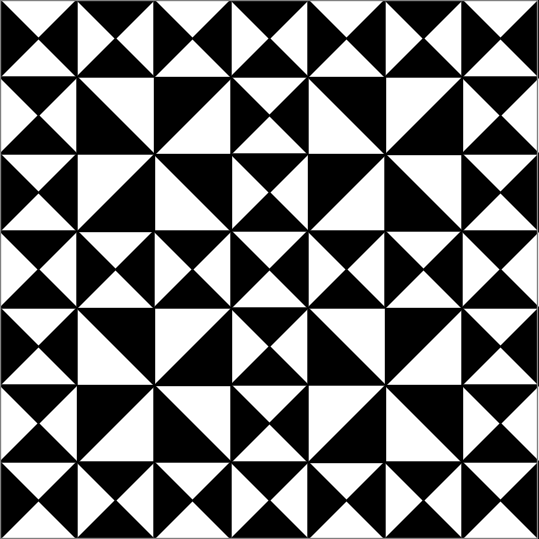}
        \caption{}    
    \end{subfigure}
    \hspace*{-0.2cm}
    \begin{subfigure}{0.3\textwidth}
        \centering
        \includegraphics[scale=0.34]{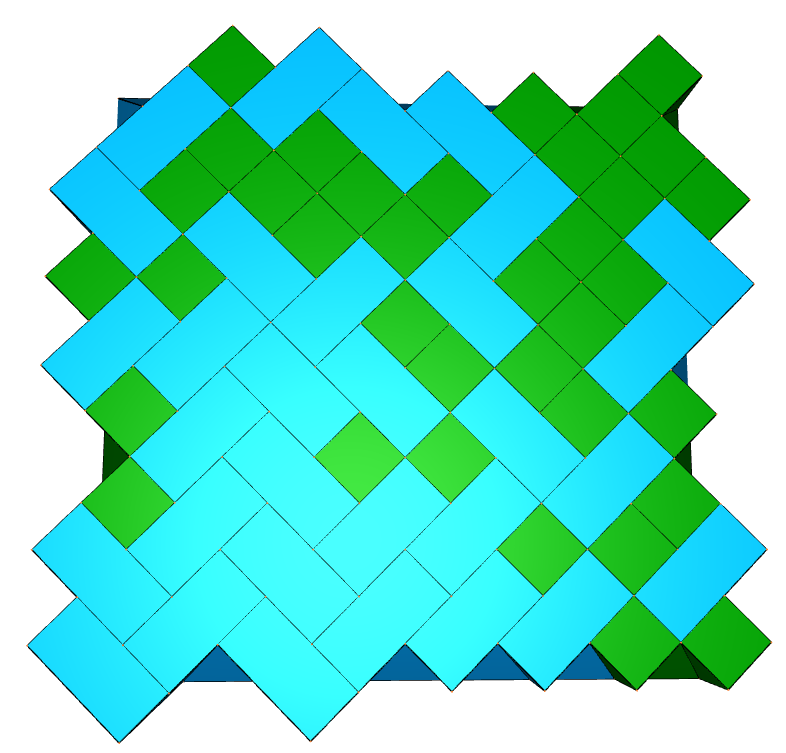}
        \caption{}    
    \end{subfigure}
    \hspace{0.7cm}
    \raisebox{10pt}{
    \begin{subfigure}{0.3\textwidth}
        \centering
        \includegraphics[scale=0.34]{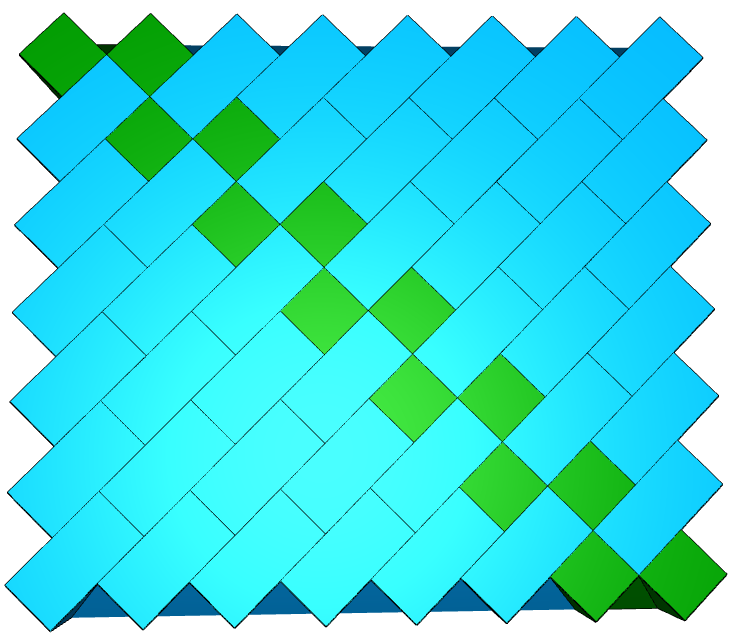}
        \caption{}    
    \end{subfigure}}
    \hspace{0.4cm}
    \raisebox{5pt}{
    \begin{subfigure}{0.3\textwidth}
        \centering
        \includegraphics[scale=0.33]{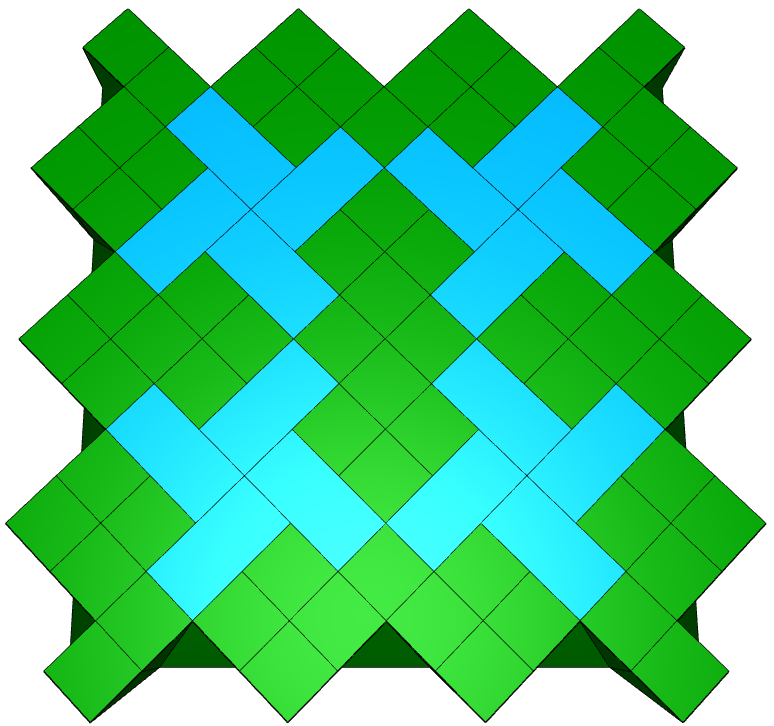}
        \caption{}    
    \end{subfigure}}
    \caption{A few selected $7\times 7$ tilings composed of bi- and quad-triangular tiles satisfying the opposite-colour adjacency condition together with the top view of their heterogenous assemblies composed of the Versatile and Bisquare Block.}
    \label{fig:possTilings}
\end{figure}

This raises the question how many $n\times m$ tilings composed of bi- and quad-triangular tiles satisfying the opposite-colour adjacency condition exist for $n,m\in\mathbb{N}$. We computed the numbers of these tilings for small $n$ and $m$ and listed them in \Cref{table:numbers}.

\begin{table}[h!]
    \centering
    \begin{tabular}[h]{|c|c|c|c|c|c|c|c|c|c|c|c|}
    \hline
    \diagbox{\textbf{$m$}}{\textbf{$n$}}  & 1 &  2 & 3 & 4 \\ 
     \hline
     1 & 6& 18& 54& 162\\
     \hline
     2 & 18 & 82 & 374 & 1706\\
     \hline
     3 & 54 & 374 & 2604& 18150\\
     \hline
     4 & 162 & 1706 & 18150 & 193662\\
     \hline
\end{tabular}
\caption{Numbers of possible $n\times m$ tilings composed of bi- and quad-triangular tiles and satisfying the opposite-colour adjacency condition.}
\label{table:numbers}
\end{table}

Surprisingly, these numbers correspond to the numbers of 3-colourings of $(n+1)\times (m+1)$-grid graphs, where the colour of the first vertex is fixed, see the sequence A078099 in the On-Line Encyclopedia of Integer Sequences~\citep{OEIS_A078099}. Note that connections between these colourings and other combinatorial objects have already been explored~\citep{Miura-ori}.
For $n,m\in \mathbb{N}$, the \emph{grid graph} $G_{n,m}$ is the graph with the vertex set $\{ 1,\dots ,n\} \times \{ 1,\dots ,m\}$, where $(i,j)$ and $(i',j')$ are adjacent if and only if $|i-i'|+|j-j'|=1$. Note that a \emph{3‑colouring} assigns one of three colours to each vertex of a graph in such a way that no two adjacent vertices share the same colour. With these definitions, it can be shown that there is indeed a correspondence between the tilings under consideration and the possible 3‑colourings of grid graphs.

\begin{theorem}
    The $n\times m$ tilings composed of bi- and quad-triangular tiles that satisfy the opposite-colour adjacency condition are in one-to-one correspondence with 3-colourings of the $(n+1)\times (m+1)$-grid graph, where the colour of the first vertex is fixed.
\end{theorem}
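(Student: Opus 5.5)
We encode everything through edge colours. For a tile, the \emph{colour of an edge} is the colour of the triangle of the tile containing that edge. In a bi-triangular tile, each triangle contains two adjacent sides of the square. In a quad-triangular tile, each triangle contains exactly one side, and opposite sides get the same colour. So in both cases exactly two sides are black and two are white. Conversely, every one of the $\binom{4}{2}=6$ ways to colour the four sides with two black and two white arises from exactly one tile orientation: the four ``adjacent pair'' patterns give the bi-triangular orientations, and the two ``opposite pair'' patterns give the quad-triangular ones. This matches the six decorations counted above. Hence a tiling is the same thing as a colouring of the pairs (tile, side of that tile) with two black sides per tile. The opposite-colour adjacency condition says that the two tiles sharing an interior edge give it different colours, while boundary edges are unconstrained.

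Next we encode a 3-colouring $c$ of the vertex set of $G_{n+1,m+1}$, with colours in $\mathbb{Z}/3$, by its increments. The vertices of $G_{n+1,m+1}$ are the corners of the $n\times m$ square grid. For an oriented grid edge $u\to w$, put $\delta(u\to w)=c(w)-c(u)\in\{+1,-1\}$; this is never $0$ because $c$ is proper. Then $\delta$ is antisymmetric, $\delta(w\to u)=-\delta(u\to w)$. Around each unit square, traversed counterclockwise, the four increments sum to $0$ modulo $3$. A sum of four values $\pm1$ lies in $\{-4,-2,0,2,4\}$, so it is $\equiv 0 \pmod 3$ only if it equals $0$. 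Thus each square has exactly two increments $+1$ and two increments $-1$.

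Conversely, let $\delta$ be any antisymmetric $\pm1$-valued function on the oriented edges such that each square has zero counterclockwise sum. Fix $c$ at the first vertex and define $c$ elsewhere by summing $\delta$ along a path. This is well defined because the grid region is simply connected: the boundaries of the unit squares generate the cycle space of the planar grid graph, so $\delta$ sums to zero modulo $3$ around every cycle. The resulting $c$ is proper since all increments are nonzero. So 3-colourings with the first vertex fixed correspond bijectively to such functions $\delta$.

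The bijection is as follows. Given $\delta$, colour the side $e$ of tile $T$ black if $\delta=+1$ when $e$ is traversed counterclockwise with respect to $T$, and white otherwise. The zero-sum condition on $T$ is exactly the condition that $T$ has two black and two white sides, so $T$ is a legal tile. An interior edge is traversed in opposite directions by the counterclockwise boundaries of its two adjacent tiles, so antisymmetry of $\delta$ is exactly the opposite-colour adjacency condition. Conversely, a valid tiling defines $\delta$ on every oriented edge, and this is consistent on interior edges precisely because of the opposite-colour adjacency condition. The two constructions are mutually inverse, which gives the claimed one-to-one correspondence.

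The only step needing care is the integration step, i.e.\ that zero sums around unit squares suffice for $\delta$ to come from a vertex colouring. I would handle it directly: define $c$ by first going along the first row and then down the columns, and check that the face condition makes the value independent of the path, by induction over the squares. As a sanity check, the count $3\cdot 2^{n+m}$ of colourings for $n=1$ reproduces the entries $6,18,54,162$ in \Cref{table:numbers}.
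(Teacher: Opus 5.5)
Your argument is correct and yields the same bijection as the paper, but it gets there by a different route.

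The paper works directly with vertex colours. It assigns explicit corner colourings to the six decorations of the top-left tile. It then extends them tile by tile through a case analysis of which edge colourings already coloured neighbours can force; this is where it has to add the extra colouring of the quad-triangular tile. For the converse, it reads off the tile type from the number of colours on a face and fixes black and white by an edge rule. For example, a horizontal edge reading blue--green, red--blue or green--red from left to right has its black triangle above it. If you identify blue, green, red with $0,1,2\in\mathbb{Z}/3$, that rule is exactly your ``black iff the counterclockwise increment is $+1$''. The colourings the paper draws for the first tile agree with it too, so the two correspondences coincide.

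Factoring through the increment function $\delta$ gives you several things the paper lacks:
\begin{itemize}
\item a single local formula for both directions, so the maps are visibly mutually inverse;
\item a reduction of the global consistency question, which the paper handles by propagation and case checks, to one integrability statement (zero sums around unit squares plus simple connectivity);
\item a clear reason why every decoration carries exactly three corner colourings differing by a cyclic shift of the colours, a point the paper treats ad hoc for the quad-triangular tile;
\item validity, unchanged, for any simply connected union of unit squares.
\end{itemize}
In effect your proof is Lenard's classical correspondence between 3-colourings of the square lattice and two-in--two-out (six-vertex) configurations, with the six decorations as the six ice vertices. The paper's route, in exchange, is explicit and pictorial and amounts to a colouring algorithm.

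Two small remarks. In the integration step you need the integral (or $\mathbb{Z}/3$) cycle space rather than the binary one. Your row-then-column induction settles this cleanly, so use it. Also, your sanity-check formula is wrong: $3\cdot 2^{n+m}$ gives $12, 24, \dots$ for $n=1$. The $2\times(m+1)$ ladder has $2\cdot 3^{m}$ proper 3-colourings with the first vertex fixed, which does give $6,18,54,162$.
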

\begin{proof}
    The corners and edges of an $n\times m$ square tiling correspond naturally to the vertices and edges of the $(n+1)\times (m+1)$-grid graph. Conversely, the $(n+1)\times (m+1)$-grid graph can be translated to a $n\times m$ tiling by decorating each square face of the planar grid as a bi- or quad-triangular tile.
    Note that in any 3‑colouring of a grid graph, two of the non-adjacent vertices in a square face must share the same colour. The remaining two vertices may either have distinct colours or may share the same colour.

    Now, assume that $T$ is a $n\times m$ tiling composed of bi- and quad-triangular tiles satisfying the opposite-colour adjacency condition. We have to prove that this tiling corresponds to a unique 3-colouring of the $(n+1)\times (m+1)$-grid graph.
    To achieve this, we specify for each tile of $T$ a vertex colouring that induces a unique 3‑colouring of the underlying grid graph.
    We require bi‑triangular tiles to be coloured such that the vertices along their diagonal have distinct colours, while quad‑triangular tiles employ only two colours, chosen so that each diagonal connects vertices of identical colour.
    
    We first determine all possible colourings of a single tile whose top left vertex is coloured blue. There are exactly six possible decorations for a tile, and we have to show that these give exactly six distinct ways to colour the remaining three vertices of the square face.
    The two colourings where the vertices have only two colours correspond to the two orientations of the quad-triangular tile, see \Cref{subfig:e,subfig:f}. Without loss of generality we can define which colouring corresponds to which orientation, as the opposite arises by interchanging the colours green and red.
    The four colourings using three colours correspond to the four orientations of the bi-triangular tile.
    Nevertheless, it is not obvious how a given orientation of a bi-triangular tile determines the colouring, or equivalently, which side of the diagonal carries the black triangle and which the white.
    Note however, that if the colours of two of its vertices are known the other two are determined.
    Consider the top edge of the quad‑triangular tile incident to a white triangle, see \Cref{subfig:e}. 
    This edge has the colour blue for the left vertex and green for the right vertex. Thus, to ensure a consistent 3‑colouring throughout the entire assembly, there must be a bottom edge of the bi‑ and quad‑triangular tile whose vertices are coloured blue (left) and green (right) and which is incident to a black triangle, thereby determining its vertex-colouring.
    A case-by-case analysis of all edges and their colours leads to a unique correspondence between the colourings and the orientations of the bi‑triangular tiles, see \Cref{subfig:a,subfig:b,subfig:c,subfig:d} for those with a blue top left corner.
    Thus, for each decoration of the top-left tile the colouring of the four vertices is fixed as shown in \Cref{fig:3colouring}. Note that the first and second colour options are $180^{\circ }$ rotations of one another.

    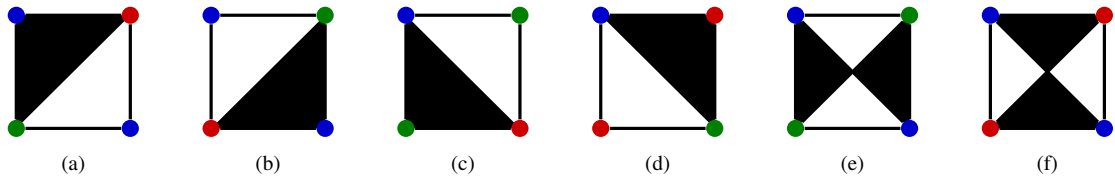
\begin{figure}[H]
        \centering
        \input{3colouring}
        \caption{The six suitable 3-colourings of the first tile depending on its decoration.}
        \label{fig:3colouring}
    \end{figure}

    With these colour assignments of the top-left tile, we now have to show that this vertex colouring can be extended to the entire grid by taking all rotational variants of the tiles into account.
    Accordingly, for each tile we verify that, once the neighbouring tiles prescribe certain vertex colours, the tile admits a unique admissible colouring.
    For this observe that every orientation of a bi‑triangular tile allows exactly three different vertex colourings, obtained by choosing one of the three colours and assigning it to the two vertices that are not connected by the diagonal, see \Cref{subfig:b,subfig:c,subfig:d} and its rotations.
    Moreover, for any edge incident to a white triangle—regardless of whether it is the top, bottom, left, or right edge—there are exactly three possible vertex‑colour combinations. The same holds for any edge incident to a black triangle.
    Thus, when colouring a bi‑triangular tile whose neighbour’s colouring is already fixed, there is exactly one possible colour choice, since the opposite‑colour adjacency condition is satisfied.
    If the colourings of two neighbouring tiles are already fixed, there is always a suitable colouring for the remaining vertex, since every possible combination of the pre‑coloured edges appears among the six options shown in \Cref{fig:3colouring} and their rotations, together with the opposite‑colour adjacency condition.
    In the case of the quad-triangular tiles, exactly two colour options exists for each orientation. Consequently, an edge that is already forced to be red and green by an adjacent tile cannot belong to a quad‑triangular tile. However, this situation cannot generally be excluded. Therefore, the colouring of the quad‑triangular tile shown in \Cref{fig:missedCol} must be included to ensure a valid 3‑colouring for any suitable tiling.

    \begin{figure}[H]
        \centering
        \input{missed_colouring}
        \caption{The third 3-colouring option for a quad-triangular tile.}
        \label{fig:missedCol}
    \end{figure}
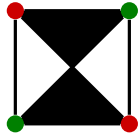
    
    With this the resulting 3-colouring is unique up to permutation of the colours and clearly induces a valid 3‑colouring of the corresponding $(n+1)\times (m+1)$-grid graph, since by construction no two adjacent vertices are coloured the same.

    Finally, assume that the $(n+1)\times (m+1)$-grid graph is given together with a 3-colouring. To obtain a tiling consisting of bi- and quad-triangular tiles, we proceed as follows: Square faces using only two colours become quad-triangular tiles, while square faces using all three colours become bi-triangular tiles, 
    with the diagonal connecting the two vertices of different colours. It remains to specify which triangles of each tile are black and which are white such that the opposite-colour adjacency condition is satisfied.
    For this, consider a horizontal edge with two incident tiles whose left vertex is blue and whose right vertex is green. We define the incident triangle above this edge to be black and the one below it to be white.
    This choice is without loss of generality, as the opposite arises by interchanging white and black.
    With this convention, and taking into account the possible colourings of the top and bottom square faces of this edge (see \Cref{fig:col_til}), we obtain that for horizontal edges coloured red–blue (left–right) or green–red (left–right), the triangle above the edge is coloured black and the one below is white. For horizontal edges coloured in the opposite direction, the assignment is reversed.
    For the vertical edges we obtain that the edges coloured blue-green, green-red and red-blue (each read from top to bottom) the triangle on the left is coloured white and the one on the right is black. For vertical edges coloured in the opposite direction, the assignment is reversed.
    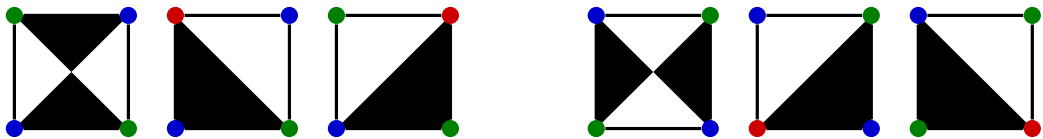
\begin{figure}[H]
        \centering
        \input{tiling}
        \caption{The possible colourings of the top and bottom square faces of a horizontal edge coloured blue-green.}
        \label{fig:col_til}
    \end{figure}
    Thus, the correspondence between the 3-colouring and the orientations of the bi- and quad-triangular tiles is uniquely defined and chosen such that the opposite-colour adjacency condition is satisfied.    
\end{proof}

In \Cref{fig:ex_correspondence}, an example of a $3\times 3$ Truchet tiling consisting of bi- and quad-triangular tiles and satisfying the opposite-colour adjacency condition, together with the unique 3-colouring of the $4\times 4$-grid graph, where the first vertex is coloured blue, is depicted.

\begin{figure}[h!]
    \centering
    \input{Example}
    \caption{An example of the one-to-one correspondence of a $3\times 3$ Truchet tiling consisting of bi- and quad-triangular tiles and satisfying the opposite-colour adjacency condition and the unique 3-colouring of the $4\times 4$-grid graph.}
    \label{fig:ex_correspondence}
\end{figure}
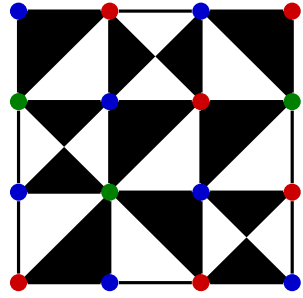

So far no closed formula for the number of 3-colourings of a grid graph is known. However, they can be computed on the basis of a matrix $M(i)$ defined recursively on $i\in\mathbb{N}$ with
$$M(1)=(1),\, M(i+1)=\begin{pmatrix}
    M(i) & M^t(i)\\
    0 & M(i)
\end{pmatrix}.$$
Then the number of possible $(n-1)\times (m-1)$ Truchet tilings consisting of bi- and quad-triangular tiles and satisfying the opposite-colour adjacency condition, and thus the number of possible arrangements of the Versatile and Bisquare Block, is given by the sum of the entries of $\left(M(m)+M(m)^t\right)^{(n-1)}$~\citep{OEIS_A078099}. Note that two tilings or arrangements may in fact be identical up to rotation or reflection.

Beyond their planar arrangements, the Versatile and Bisquare Blocks also support various spatial tessellations due to the choice of the coordinates. Non-planar configurations of the Versatile Block have been presented by~\cite{bridges23} whereas \Cref{fig:3D} illustrates possible non-planar arrangements of two Bisquare Blocks as well as of a Bisquare Block combined with a Versatile Block.

\begin{figure}[H]
    \centering
    \begin{subfigure}{0.32\textwidth}
        \includegraphics[scale=0.35]{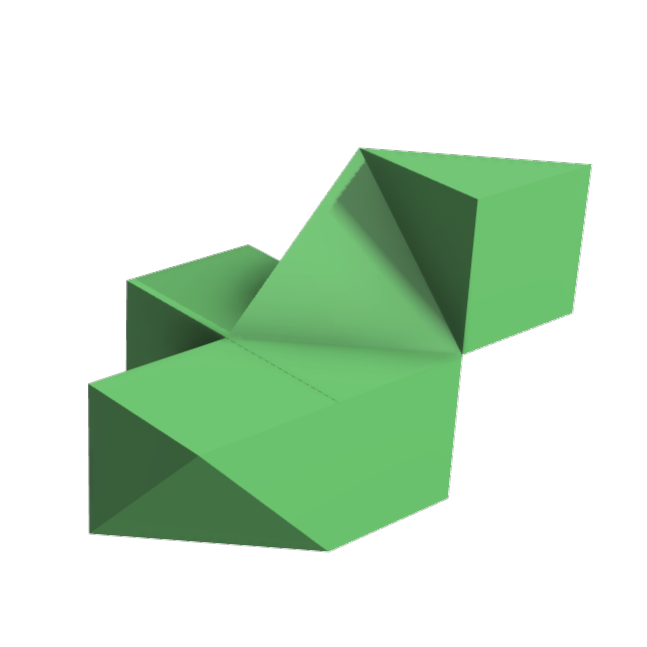}
        \caption{}
    \end{subfigure}
    \begin{subfigure}{0.32\textwidth}
        \includegraphics[scale=0.58]{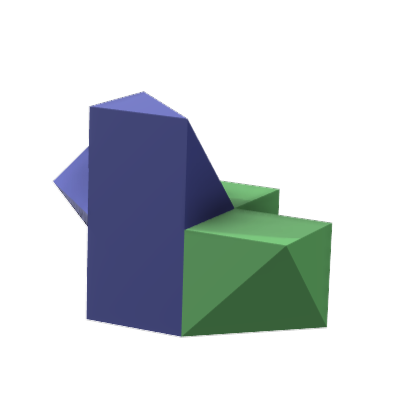}
        \caption{}
    \end{subfigure}
    \begin{subfigure}{0.32\textwidth}
        \includegraphics[scale=0.6]{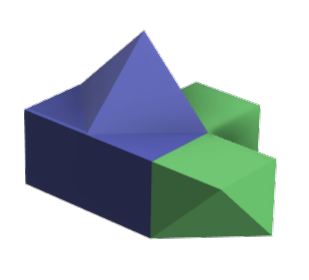}
        \caption{}
    \end{subfigure}
    \caption{Possible non-planar arrangements of two Bisquare Blocks (a) as well as of a Bisquare Block combined with a Versatile Block (b) and (c).}
    \label{fig:3D}
\end{figure}

By applying more general edge deformations to the square, such as smooth curves, additional intermediate points or asymmetric deformation paths, one obtains other blocks that can also be assembled together.
In \Cref{subfig:oct_sym_block,subfig:oct_asym_block} are two blocks depicted generated from a symmetric deformation of the square in which the deformation paths intersect the square’s edges.
These two blocks can be arranged in various ways. To determine which arrangements are admissible, we again translate them to square tiles subdivided into eight triangles. The triangles are coloured black and white, but adjacent triangles inside a square tile may now share the same colour, see \Cref{subfig:oct_sym,subfig:oct_asym}.

\begin{figure}[h!]
    \centering
     \begin{subfigure}{0.3\textwidth}
        \centering
        \includegraphics[scale=0.2]{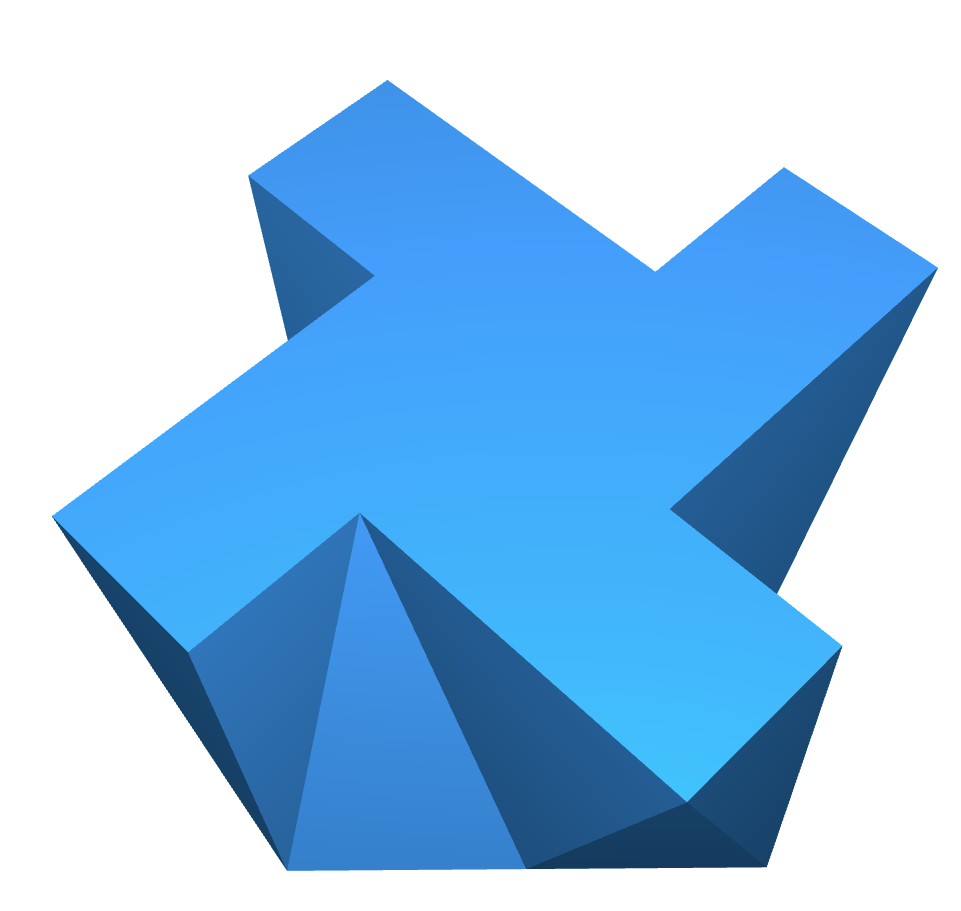}
        \caption{}
        \label{subfig:oct_sym_block}
    \end{subfigure}
    \begin{subfigure}{0.18\textwidth}
        \centering
        \raisebox{20pt}{
        \begin{tikzpicture}[scale=1.8]
            \fill[black] (0,0) -- (0.5,0) -- (0.5,0.5) -- cycle;
            \fill[black] (0,1) -- (0,0.5) -- (0.5,0.5) -- cycle;
            \fill[black] (1,1) -- (0.5,1) -- (0.5,0.5) -- cycle;
            \fill[black] (1,0) -- (1,0.5) -- (0.5,0.5) -- cycle;
        
            \draw[-,very thick] (0,0) to (1,0);
            \draw[-,very thick] (1,0) to (1,1);
            \draw[-,very thick] (1,1) to (0,1);
            \draw[-,very thick] (0,1) to (0,0);
        \end{tikzpicture}}
        \hspace{0.4cm}
        \caption{}
        \label{subfig:oct_sym}
    \end{subfigure}
    \begin{subfigure}{0.3\textwidth}
        \centering  
        \includegraphics[scale=0.22]{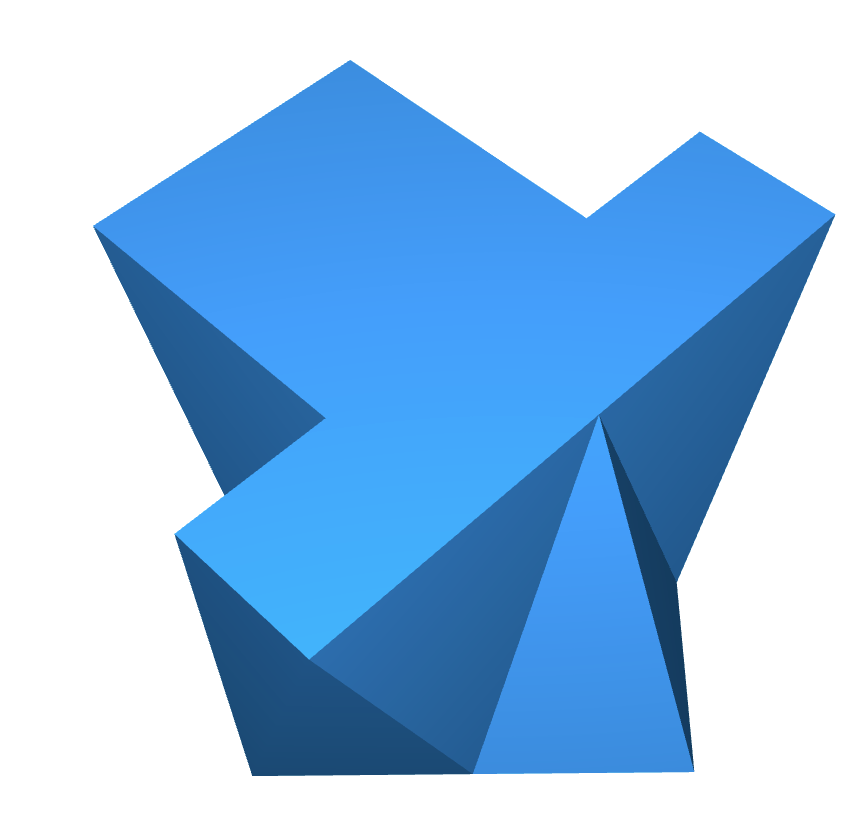}
        \caption{}
        \label{subfig:oct_asym_block}
    \end{subfigure}
    \begin{subfigure}{0.18\textwidth}
        \centering
        \raisebox{20pt}{
        \begin{tikzpicture}[scale=1.8]
            \fill[black] (0,0) -- (0.5,0.5) -- (0,0.5) -- cycle;
            \fill[black] (1,1) -- (0.5,0.5) -- (0.5,1) -- cycle;
            \fill[black] (0.5,0) -- (1,0) -- (1,0.5) -- (0.5,0.5) -- cycle;
        
            \draw[-,very thick] (0,0) to (1,0);
            \draw[-,very thick] (1,0) to (1,1);
            \draw[-,very thick] (1,1) to (0,1);
            \draw[-,very thick] (0,1) to (0,0);
        \end{tikzpicture}}
        \caption{}
        \label{subfig:oct_asym}
    \end{subfigure}
    \caption{Two blocks deformed by a zig-zag deformation path intersecting the edges of the fundamental domain (a) and (c) and their generalized Truchet tiles (b) and (d).}
    \label{fig:p4_inter}
\end{figure}

Tilings composed of the above tiles and satisfying the opposite‑adjacency condition correspond to valid arrangements of the blocks analogously to the bi- and quad-triangular tiles. A periodic example of such a tiling, together with the top view of its associated heterogenous assembly, is shown in \Cref{fig:zigzagAss}.

\begin{figure}[H]
    \centering
    \begin{subfigure}{0.4\textwidth}
        \centering
        \raisebox{10pt}{
        \input{ZigZagAssembly}}
        \caption{}
    \end{subfigure}
    \begin{subfigure}{0.4\textwidth}
        \centering
        \includegraphics[scale=0.35]{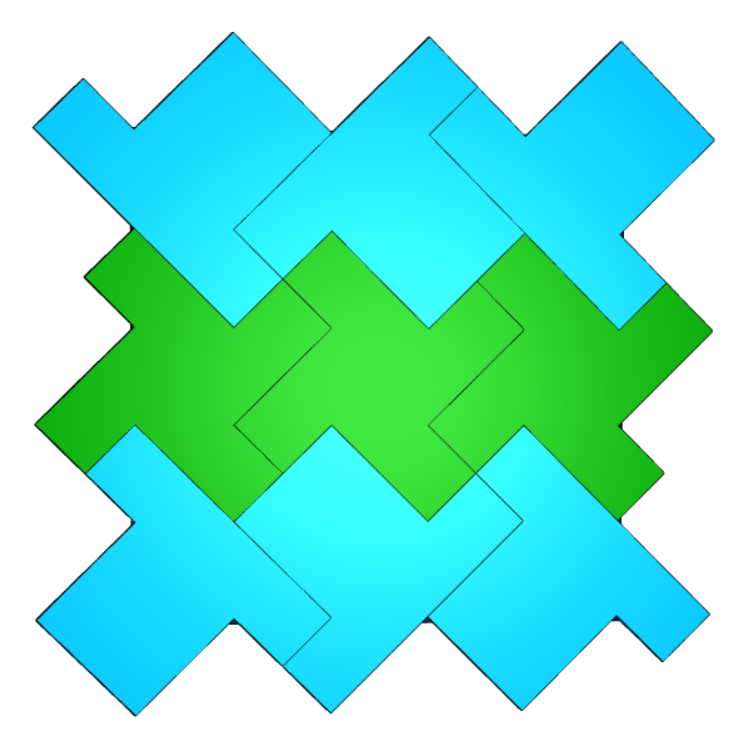}
        \caption{}
    \end{subfigure}
    \caption{A periodic tiling (a) and the corresponding heterogenous assembly of the two $p4$ blocks obtained by zig-zag deformation (b).}
    \label{fig:zigzagAss}
\end{figure}

Instead of piecewise linear curves it is also possible to use a quadratic deformation. Using the same deformation points as the Bisquare Block but with a quadratic rather than a linear deformation the block shown in \Cref{subfig:bisquare_curved} is obtained.
However, this block does not have an obverse block, as the quadratic deformation paths would intersect for this configuration.
By instead choosing a quadratic deformation whose intermediate points lie closer to the edges, a curved Abeille vault, designed by Truchet, is obtained and illustrated in \Cref{subfig:abeille}. For this block, an obverse block can be designed, as shown in \Cref{subfig:curvVersatile}.
\begin{figure}[h!]
    \centering
    \begin{subfigure}{0.35\textwidth}
        \centering
        \includegraphics[scale=0.23]{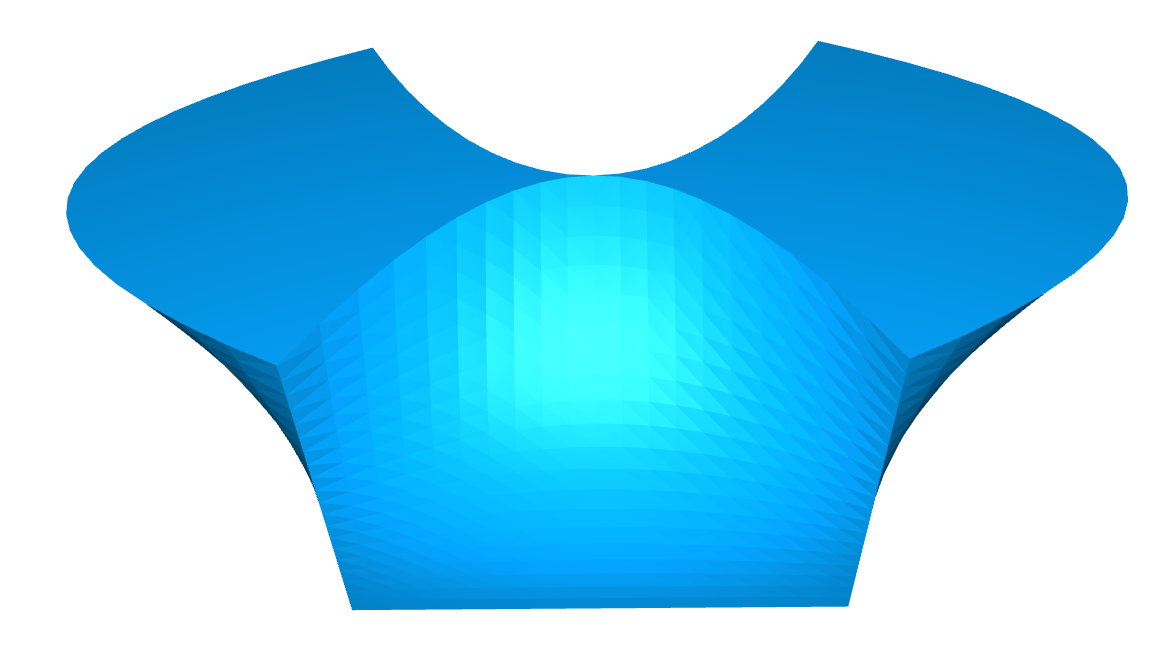}
        \caption{}
        \label{subfig:bisquare_curved}
    \end{subfigure}
    \begin{subfigure}{0.31\textwidth}
        \centering
        \includegraphics[scale=0.3]{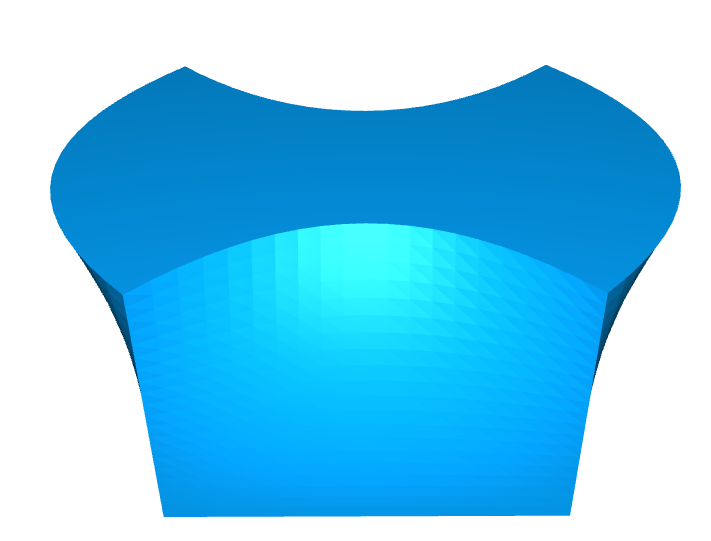}
        \caption{}
        \label{subfig:abeille}
    \end{subfigure}
    \begin{subfigure}{0.31\textwidth}
        \centering
        \includegraphics[scale=0.21]{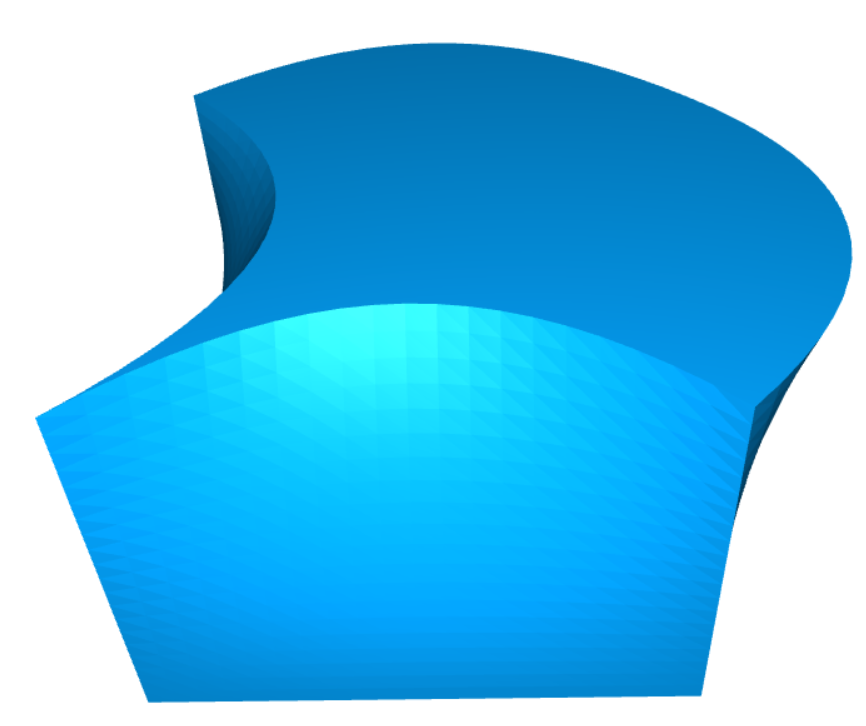}
        \caption{}
        \label{subfig:curvVersatile}
    \end{subfigure}
    \caption{Three blocks which can be constructed by the Escher trick and a quadratic deformation: A block analogous to the Bisquare Block (a), an Abeille vault (b) and its obverse block (c).}
    \label{fig:curved}
\end{figure}

The combinatorial structure derived from the tilings consisting of bi- and quad-triangular tiles remains valid even when the blocks are constructed by smooth deformations. 
In general, if two $p4$ blocks can be assembled together such that one corresponds to a bi‑triangular tile and the other to a quad‑triangular tile, the total number of admissible block arrangements remains unchanged.
Furthermore, an alternative fundamental domain to the square can be selected for the $p4$ group to create more heterogeneous space-filling assemblies, provided the domain has an even number of edges.

\subsection{Heterogenous assemblies based on \texorpdfstring{$p3$}{p3} symmetry}\label{sec:p3}

Another prominent class of tilings besides square tilings are lozenge tilings. They admit a natural correspondence with the $p3$ wallpaper group, since a lozenge can serve as a fundamental domain for $p3$.
A lozenge admits the same type of edge deformation as in the Versatile construction: we place a point at the midpoint of two adjacent edges not contained in the same edge pair and shift these points along the normal direction until the corresponding points of the opposite pair meet in the inner of the lozenge. The induced deformation and its obverse yield two distinct fundamental domains, as shown in \Cref{fig:lozengeDiff}.

\begin{figure}[H]
    \centering
    \input{Different_Deformations_Lozenge}
    \caption{Two compatible deformations of a lozenge both respecting $p3$ symmetry.}
    \label{fig:lozengeDiff}
\end{figure}
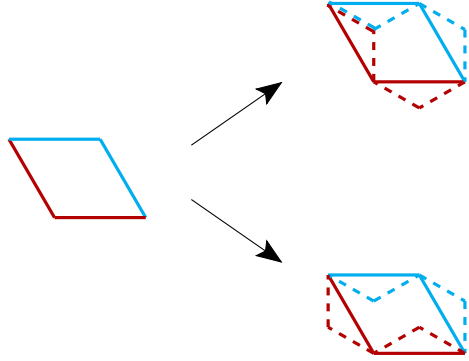

These two deformations of the lozenge give rise to two distinct blocks (see \Cref{fig:p3Blocks}). We refer to the left block of as the \emph{Rhom Block}, first introduced by~\cite{DissTom}, and to the right block as its obverse block.
They can be arranged either homogenously with $p3$ symmetry or together in a heterogenous assembly. 
 
\begin{figure}[h!]
    \centering
    \begin{subfigure}{0.4\textwidth}
        \centering
        \includegraphics[scale=0.52]{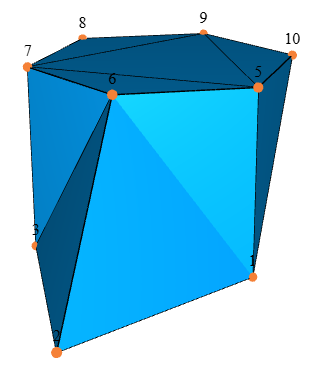}
        \vspace{-0.2cm}
        \caption{}
    \end{subfigure}
    \begin{subfigure}{0.4\textwidth}
        \centering
        \raisebox{10pt}{
        \includegraphics[scale=0.65]{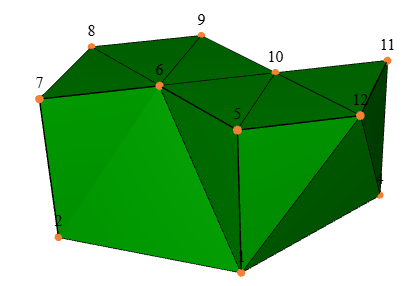}}
        \caption{}
    \end{subfigure}
    \caption{The Rhom Block (a) and its obverse block (b).}
    \label{fig:p3Blocks}
\end{figure}

The coordinates of the blocks are chosen so that the lozenge has edge length 1 and the blocks can also be assembled in a non-planar way.
With this the coordinates of the Rhom Block are defined as
$$\left(0,0,0\right),\left(\frac{1}{2},\frac{\sqrt{3}}{2},0\right),\left(1,0,0\right),\left(\frac{1}{2},-\frac{\sqrt{3}}{2},0\right),$$
$$\left(0,0,\frac{\sqrt{6}}{3}\right),\left(\frac{1}{2},\frac{\sqrt{3}}{6},\frac{\sqrt{6}}{3}\right),\left(1,0,\frac{\sqrt{6}}{3}\right),
\left(1,-\frac{\sqrt{3}}{3},\frac{\sqrt{6}}{3}\right),\left(\frac{1}{2},-\frac{\sqrt{3}}{2},\frac{\sqrt{6}}{3}\right),\left(0,-\frac{\sqrt{3}}{3},\frac{\sqrt{6}}{3}\right)$$
and the coordinates of its obverse block as
$$\left(0,0,0\right),\left(\frac{1}{2},\frac{\sqrt{3}}{2},0\right),\left(1,0,0\right),\left(\frac{1}{2},-\frac{\sqrt{3}}{2},0\right),\left(0,0,\frac{\sqrt{6}}{3}\right),\left(\frac{1}{2},\frac{\sqrt{3}}{6},\frac{\sqrt{6}}{3}\right)$$
$$\left(\frac{1}{2},\frac{\sqrt{3}}{2},\frac{\sqrt{6}}{3}\right),
\left(1,\frac{\sqrt{3}}{3},\frac{\sqrt{6}}{3}\right),\left(1,0,\frac{\sqrt{6}}{3}\right),\left(\frac{1}{2},-\frac{\sqrt{3}}{6},\frac{\sqrt{6}}{3}\right),\left(\frac{1}{2},-\frac{\sqrt{3}}{2},\frac{\sqrt{6}}{3}\right),\left(0,-\frac{\sqrt{3}}{3},\frac{\sqrt{6}}{3}\right).$$

Similar to the correspondence between square tilings and assemblies composed of Versatile and Bisquare Blocks, the Rhomb Block and its obverse block correspond to lozenge tilings.
Analogously, the lozenges are subdivided into two triangles for the Rhom Block and into four triangles for its obverse block as shown in \Cref{fig:lozenge}.

\begin{figure}[H]
    \centering
    \begin{subfigure}{0.4\textwidth}
        \raisebox{14pt}{
        \input{Lozenge_Rhom}}
        \includegraphics[scale=0.25]{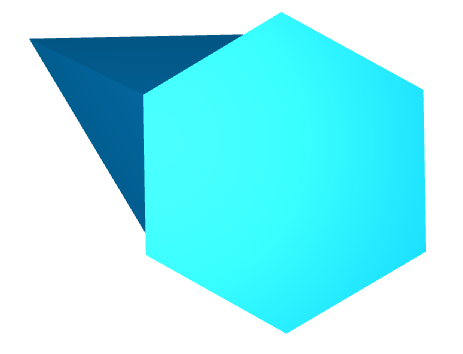}
        \caption{}
    \end{subfigure}
    \begin{subfigure}{0.4\textwidth}
        \raisebox{-80pt}{
        \input{Lozenge_p3}}
        \raisebox{-100pt}{\includegraphics[scale=0.3]{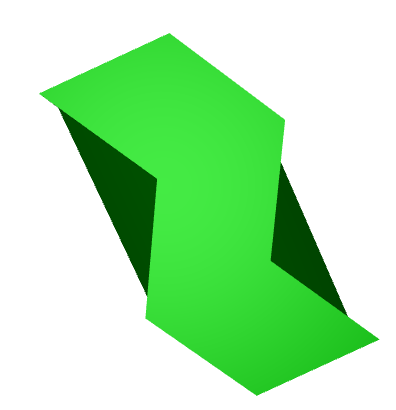}}
        \caption{}
    \end{subfigure}
    \caption{The correspondence between a lozenge and the Rhom Block (a) and its obverse block (b).}
    \label{fig:lozenge}
\end{figure}

In the case of $p4$ symmetry with a square as the fundamental domain, there is a unique square tiling that must be suitably decorated with the bi- and quad-triangular tiles to obtain an assembly of the Versatile and Bisquare Block.
However, there is a wide variety of lozenge tilings, as each lozenge can be rotated by $120^\circ$. This means that lozenges have three different orientations whereas the square has only one. As shown by~\cite{Gorin2021}, the number of possible lozenge tilings of a $a\times b\times c$ hexagon is $$\prod_{i=1}^a\prod_{j=1}^b\prod_{k=1}^c\frac{a+b+c-1}{a+b+c-2}.$$
For all of these lozenge tilings, many different decorations of the tiles depicted in \Cref{fig:lozenge}, are possible. Every decoration of a lozenge tiling that satisfies the opposite‑colour adjacency condition can be translated into an assembly of the Rhom Block and its obverse block.
In \Cref{subfig:aperiodic_p3} a possible aperiodic decoration of the $p3$ lozenge tiling forming a $3\times 3\times 3$ hexagon is shown, with the corresponding assembly depicted in \Cref{subfig:aperiodic_ass}.

\begin{figure}[H]
    \centering
    \begin{subfigure}{0.45\textwidth}
        \centering
        \raisebox{15pt}{
        \input{LozengeTiling1}}
        \caption{}
        \label{subfig:aperiodic_p3}
    \end{subfigure}
    \begin{subfigure}{0.45\textwidth}
        \centering
        \includegraphics[scale=0.3]{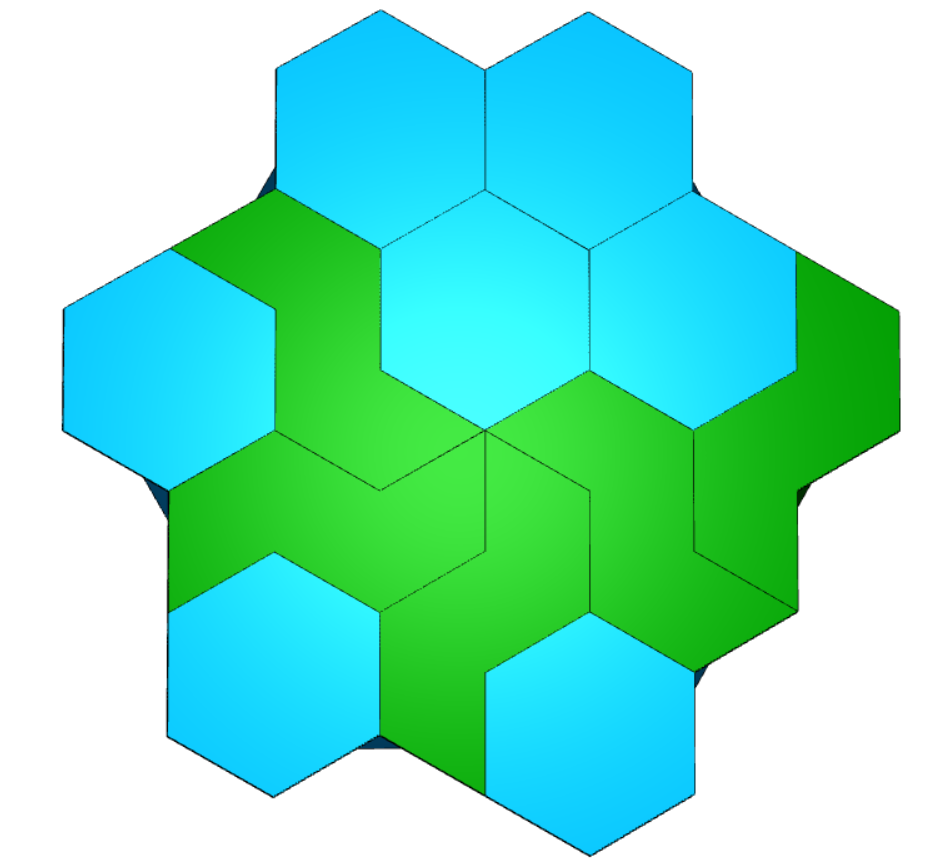}
        \caption{}
        \label{subfig:aperiodic_ass}
    \end{subfigure}
    \caption{An aperiodic decorated $p3$ lozenge tiling of a $3\times 3\times 3$ hexagon (a) and the corresponding heterogenous assembly containing the Rhom Block and its obverse block (b).}
    \label{fig:lozengeTilings}
\end{figure}

As in the case of the $p4$ group, both zig‑zag and smooth deformations can be used to generate further examples of interlocking blocks, as well as alternative fundamental domains with an even number of edges.

\subsection{Heterogenous assemblies based on \texorpdfstring{$p6$}{p6} symmetry}
For the wallpaper group $p6$, whose symmetry includes $60^{\circ}$ rotations, a lozenge can likewise be taken as a fundamental domain, analogous to the $p3$ case, but with a different centre of rotation. Hence, the bottom deformation introduced in \Cref{sec:p3} remain valid for the $p6$ group.
Another possible choice for the fundamental domain of the wallpaper group $p6$ is a kite.
Its edges admit the same type of deformation as in the Versatile construction: we place a point at the midpoint of two adjacent edges not contained in the same edge pair and shift these points along the normal direction until the corresponding points of the opposite pair meet. The induced deformation and its obverse yield two distinct fundamental domains, as shown in \Cref{fig:kite}.

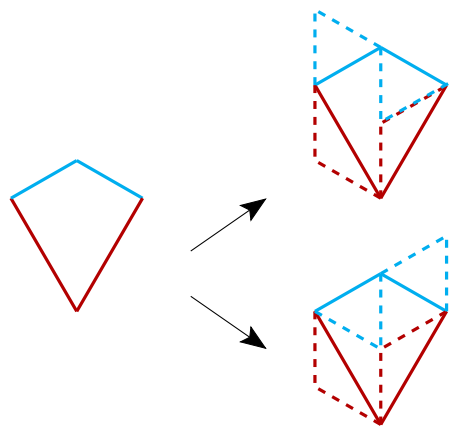
\begin{figure}[h!]
    \centering
    \input{Different_Deformations_Kite}
    \caption{Two compatible deformations of a kite both respecting $p6$ symmetry.}
    \label{fig:kite}
\end{figure}

These deformations give rise to two distinct blocks: a sheared variant of the Versatile Block (see \Cref{subfig:versatilep6}) and a block whose top face consists of two lozenges meeting at a vertex (see \Cref{subfig:bilozenge}), similar to the two squares of the Bisquare Block. Both blocks can be assembled homogenously using the wallpaper group $p6$, or they may be combined heterogenously within an assembly. In analogy to the decorations of the square and lozenge tilings, the blocks can again be related to tiles, here kites, and equipped with a bi‑colouring that reflects their respective orientations, see \Cref{subfig:versatilep6_tile,subfig:bilozenge_tile}.

\begin{figure}[H]
    \centering
    \begin{subfigure}{0.3\textwidth}
        \centering
        \includegraphics[scale=0.35]{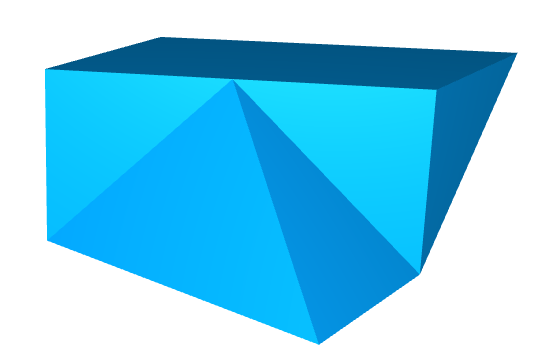}
        \caption{}
        \label{subfig:versatilep6}
    \end{subfigure}
    \begin{subfigure}{0.18\textwidth}
        \centering
        \raisebox{20pt}{\input{versatile_p6_tile}}
        \caption{}
        \label{subfig:versatilep6_tile} 
    \end{subfigure}
    \begin{subfigure}{0.3\textwidth}
        \centering
        \raisebox{20pt}{
        \includegraphics[scale=0.4]{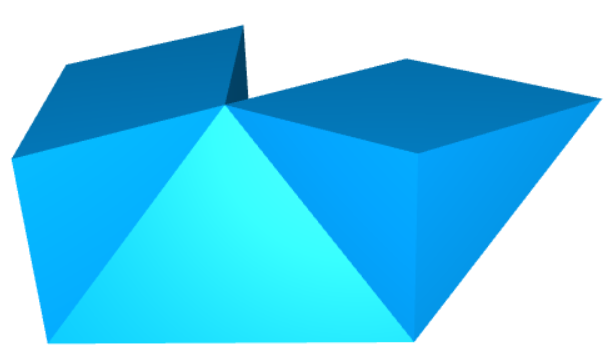}}
        \caption{}
        \label{subfig:bilozenge}
    \end{subfigure}
    \begin{subfigure}{0.18\textwidth}
        \centering
        \raisebox{20pt}{
        \input{Bilozenge_tile}}
        \caption{}
        \label{subfig:bilozenge_tile} 
    \end{subfigure}
    \caption{The $p6$ variants of the Versatile Block (a) and the Bisquare Block (c) and their corresponding tiles (b) + (d).}
    \label{fig:p6Blocks}
\end{figure}

As with the other feasible wallpaper groups, both zig‑zag and smooth deformations can be used to generate further examples of blocks admitting TIAs.

\subsection{Example for \texorpdfstring{$p1$}{p1} and \texorpdfstring{$p2$}{p2} symmetry}
In the case of the wallpaper groups $p1$ and $p2$, we begin with a square as the fundamental domain. For both groups, opposite edges form an edge pair, but in $p1$ they are mapped to each other by a translation, whereas in $p2$ the mapping is given by a $180^\circ$ rotation. 
A deformation preserving both groups is a symmetric zig‑zag path resulting in two different deformations of the square, as shown in \Cref{subfig:p1_p2_deform}.
This yields the block depicted in \Cref{subfig:oct_sym_block}, which can be arranged with $p1$, $p2$, and $p4$ symmetry and the obverse block, shown in \Cref{subfig:p1_p2_block}, that can be arranged according to the $p1$ and $p2$ groups. As in the previous cases, both blocks can also be combined to form a heterogeneous assembly, which, analogously to the $p4$ setting, relates to Truchet tilings.

\begin{figure}[h!]
    \centering
    \begin{subfigure}{0.55\textwidth}
        \centering
        \hspace{-2cm}
        \input{Different_Deformations_p1}
        \caption{}
        \label{subfig:p1_p2_deform}
    \end{subfigure}
    \begin{subfigure}{0.4\textwidth}
        \centering
        \includegraphics[scale=0.35]{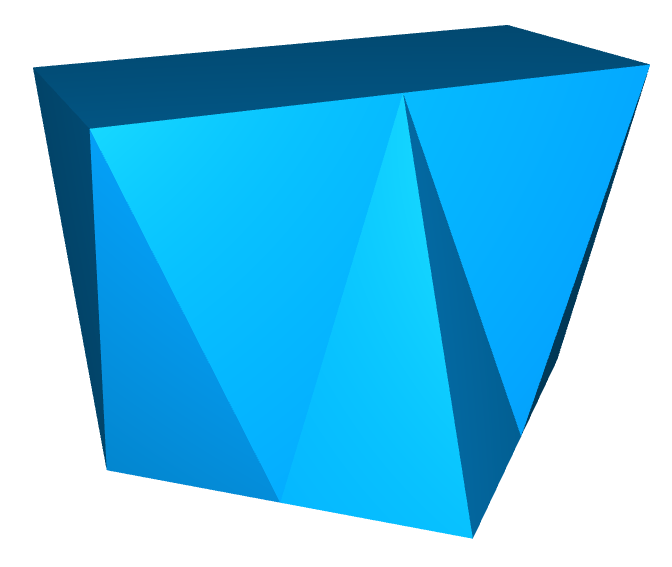}
        \caption{}
        \label{subfig:p1_p2_block}
    \end{subfigure}
    \caption{Symmetric zig-zag deformation preserving $p1$ and $p2$ (a) and the block resulting from the upper deformation (b).}
\end{figure}

\subsection{Heterogenous assemblies with glide reflection}

The group $pg$ consists solely of translations and glide reflections, each of which is a reflection combined with a translation. A convenient choice of a fundamental domain is a square equipped with a vertical glide reflection.
In this setup, the vertical edges of the square are identified by a translation, while the horizontal edges are identified by a reflection followed by a translation.
We again deform the edges using symmetric zig‑zag paths, as illustrated in \Cref{subfig:pg_deform}. This construction yields a fundamental domain $F$ whose vertical reflection does not yield a rotated copy of $F$. Consequently, arranging the block $B$ derived from $F$ (see \Cref{subfig:pg_block1}) in $pg$ symmetry requires a second block, resulting from the vertical reflection of $B$ (see \Cref{subfig:pg_block2}). This demonstrates that, for certain deformations in the $pg$ group case, a heterogeneous assembly becomes directly necessary. However, the second block required for the assembly is precisely the one obtained by reversing the order in which the vertical edges of the square are deformed—analogous to the construction used for the Versatile Block and the Bisquare Block.

\begin{figure}[h!]
    \centering
    \begin{subfigure}{0.35\textwidth}
        \centering
        \hspace{-4cm}
        \input{pg_deformation}
        \caption{}
        \label{subfig:pg_deform}
    \end{subfigure}
    \begin{subfigure}{0.26\textwidth}
        \centering
        \includegraphics[scale=0.3]{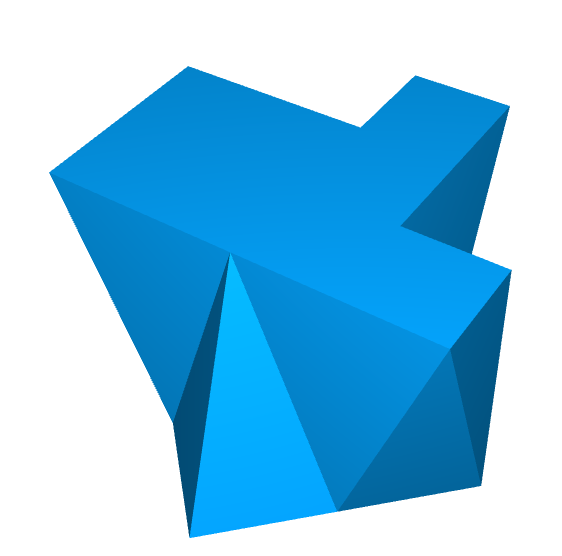}
        \caption{}
        \label{subfig:pg_block1}
    \end{subfigure}
    \begin{subfigure}{0.26\textwidth}
        \centering
        \includegraphics[scale=0.25]{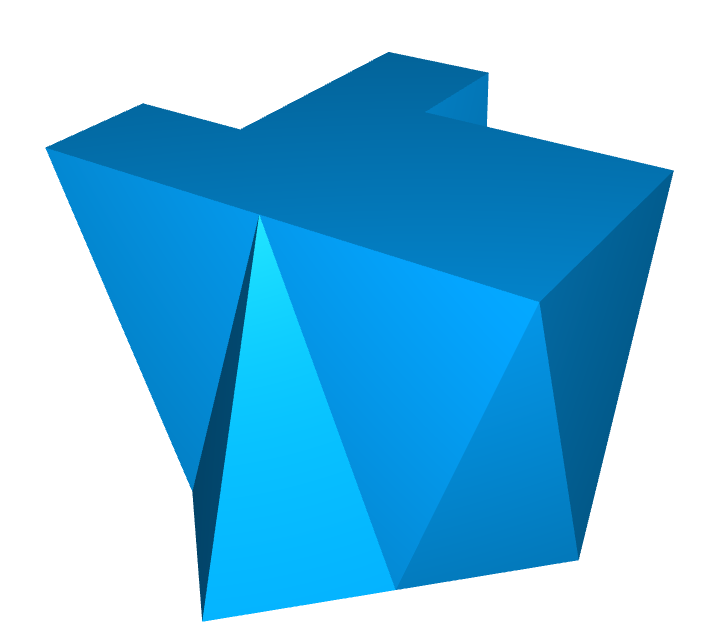}
        \caption{}
        \label{subfig:pg_block2}
    \end{subfigure}
    \caption{Symmetric zig-zag deformation preserving $pg$ symmetry (a), the resulting block (b) and the reflected/obverse block (c).}
\end{figure}

The last wallpaper group of the seven groups that can be used to construct topological interlocking assemblies is $p2gg$. This group contains two glide reflections in perpendicular directions and two rotations of $180^\circ$.
A square can be chosen again as the fundamental domain where the vertical and horizontal edges are identified by a glide reflection. 
To achieve a truly heterogeneous assembly, the different edge pairs must be deformed by different deformation types.
One edge pair is deformed symmetrically by introducing an intermediate point at the midpoint of the edge, whereas the other edge pair is deformed by an intermediate point corresponding to an end vertex that is shifted in the normal direction of the edge. The resulting deformation is illustrated in \Cref{subfig:p2gg_deform}.
Analogously to the example of the $pg$ case we directly obtain two blocks that are necessary for a $p2gg$ arrangement, see \Cref{subfig:p2gg_block1,subfig:p2gg_block2}.

\begin{figure}[H]
    \centering
    \begin{subfigure}{0.35\textwidth}
        \centering
        \hspace{-4cm}
        \input{p2gg_deformation}
        \caption{}
        \label{subfig:p2gg_deform}
    \end{subfigure}
    \begin{subfigure}{0.26\textwidth}
        \centering
        \includegraphics[scale=0.25]{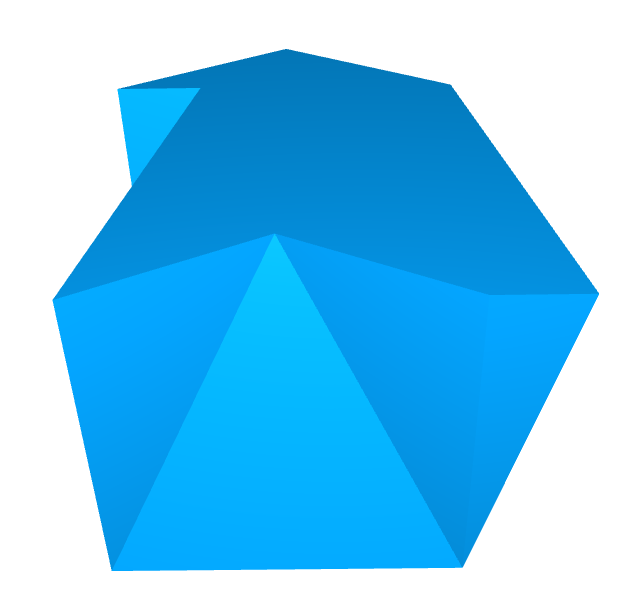}
        \caption{}
        \label{subfig:p2gg_block1}
    \end{subfigure}
    \begin{subfigure}{0.26\textwidth}
        \centering
        \includegraphics[scale=0.2]{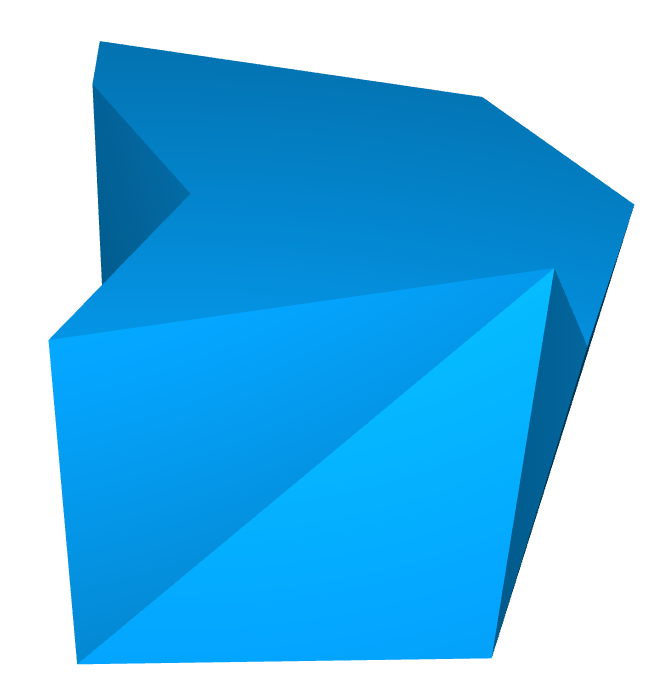}
        \caption{}
        \label{subfig:p2gg_block2}
    \end{subfigure}
    \caption{Asymmetric deformation preserving $p2gg$ symmetry (a), the resulting block (b) and the reflected block (c).}
\end{figure}

\section{Using Semiregular Tessellations}

Heterogenous and space-filling assemblies can also be constructed from tilings composed of multiple polygons, rather than restricting oneself to fundamental domains of wallpaper groups.
In such cases, for example for semiregular tessellations, the Escher Trick must be applied to several polygons simultaneously so that their deformed versions remain compatible.
A \emph{semiregular tessellation} is a tessellation of the plane by at least two regular polygons arranged so that the same polygons appear in the same cyclic order around every vertex. There exist exactly eight semiregular tessellations of the plane~\citep{TilingsAndPatterns}.

The \emph{snub square tiling} is a semiregular tessellation consisting of triangles and squares, in which the cyclic ordering of polygons around each vertex is two adjacent triangles, followed by a square, then two triangles, and finally another square. 
Since two triangles always occur as a pair, they can be combined to a lozenge, and the resulting tiling is shown in \Cref{subfig:snub_square}.
The snub square tiling exhibits a $p4g$ symmetry, characterized by $90^\circ$ rotations and glide reflections. Since each edge is shared by a square and a lozenge, any chosen deformation curve necessarily constrains the deformation of both, the square and the lozenge.

As an example, the lozenge can be deformed to a hexagon as for the Rhom Block. In this case, the squares must be deformed in a manner similar to the Versatile Block, but with the intermediate point placed closer to the edge.
These deformations are depicted in \Cref{subfig:deform_rhom} and preserve the $90^{\circ }$ rotational symmetry and the glide‑reflection symmetry due to their inherent symmetry.
Another possibility is to deform the squares in the same manner as for the Versatile Block. In this case, the lozenges must be deformed as shown in \Cref{subfig:deform_versatile}, since otherwise the deformation paths would intersect, which is not permitted.

\begin{figure}[h!]
    \centering
    \begin{subfigure}{0.32\textwidth}
        \centering 
        \includegraphics[scale=0.3]{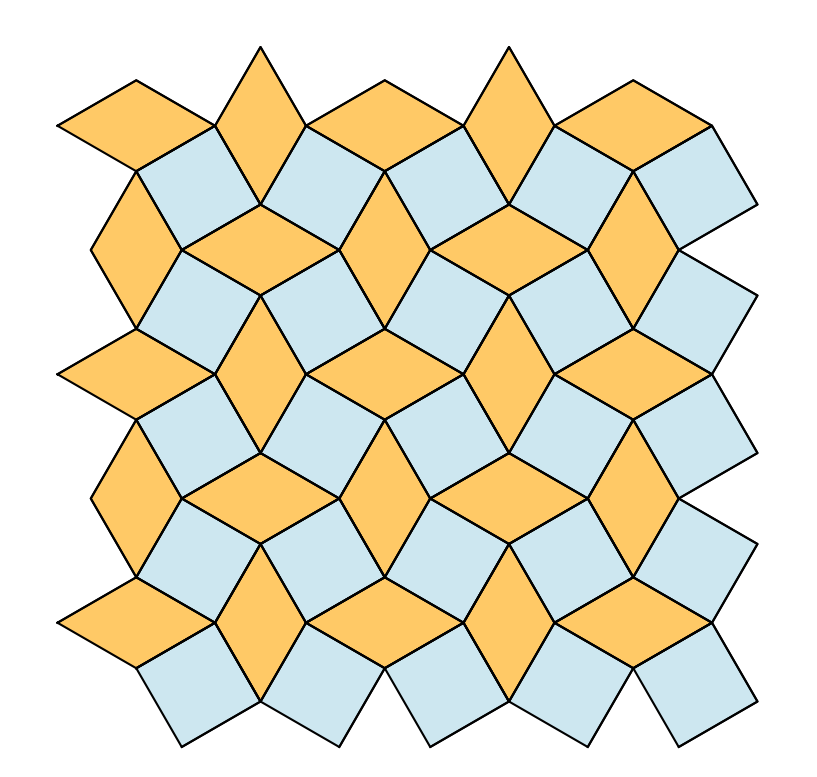}
        \caption{}
        \label{subfig:snub_square}
    \end{subfigure}
    \begin{subfigure}{0.32\textwidth}
        \centering
        \input{DeformationRhom}
        \caption{}
        \label{subfig:deform_rhom}
    \end{subfigure}
    \begin{subfigure}{0.32\textwidth}
        \centering
        \input{DeformationVersatile}
        \caption{}
        \label{subfig:deform_versatile}
    \end{subfigure}
    \caption{The snub square tiling with two triangles joined to a lozenge (a) and two deformations of a lozenge and a neighbouring square: Based on the deformation of the Rhom Block (b) and the Versatile Block (c).}
    \label{fig:deform_snub}
\end{figure}

The assembly based on the snub square tiling with the Rhom Block and its obverse block is depicted in \Cref{subfig:rhom_top} where the Rhom Block is coloured blue and the obverse block red.
In the other case, where the square is deformed as for the Versatile Block, the obverse block for the lozenge no longer preserves the glide‑reflection symmetry.
Thus, to obtain a suitable arrangement based on the snub square tiling, we also need to include the reflection of the block resulting from the deformation of the lozenge. This yields a heterogenous arrangement of three different blocks, see \Cref{subfig:versatile_top}. Here, the Versatile Block is coloured red and the two blocks corresponding to the lozenges are coloured blue and green.

\begin{figure}[h!]
    \centering
    \begin{subfigure}{0.45\textwidth}
        \centering
        \includegraphics[scale=0.35]{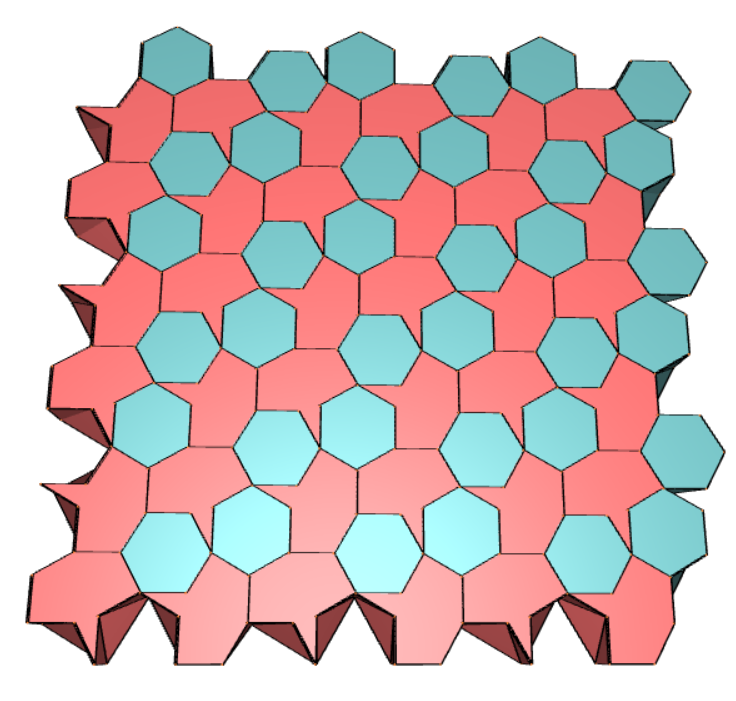}
        \caption{}
        \label{subfig:rhom_top}
    \end{subfigure}
    \begin{subfigure}{0.45\textwidth}
        \centering
        \includegraphics[scale=0.4]{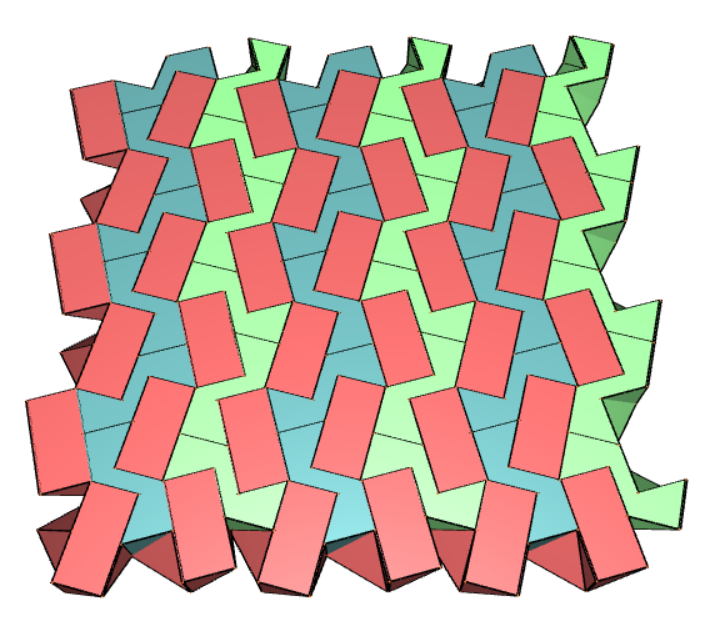}
        \caption{}
        \label{subfig:versatile_top}
    \end{subfigure}
    \caption{The assembly based on the snub square tiling with the Rhom Block (a) and the assembly based on the snub square tiling with the Versatile Block (b).}
    \label{fig:snub_square}
\end{figure}

Similar to the bi‑ and quad‑triangular squares and the decorated lozenges, the squares and lozenges of the snub square tiling can also be coloured to determine which arrangements are possible when satisfying the opposite‑colour adjacency condition.
Analogously, to the construction used for the snub square tiling, all other semiregular tilings can be employed in the same way.
Moreover, smooth deformations can likewise be applied in the case of semiregular tilings.

\section{Conclusion}

Many different examples of blocks that form an assembly consisting of one or more block types have been presented. By choosing the blocks at the perimeter as the frame these assemblies are heterogeneous space-filling topological interlocking assemblies.
This illustrates the breadth of possibilities offered by the method based on the Escher Trick. In most cases, piecewise linear paths were used, but—as shown in \Cref{fig:curved}—non‑linear deformations are equally feasible. For most wallpaper groups we applied the same edge deformation to all edge pairs, but the design space expands significantly when different deformations are assigned to individual edge pairs. However, this reduces the symmetry of the blocks, which may be a disadvantage depending on the intended application.
Moreover, the size of the blocks can be scaled for the intended application, and the distance between the two fundamental domains, i.e.\ the height of the block, can be chosen independently. For improved material efficiency, all these blocks can be made hollow by removing the top and bottom faces.

To obtain an even larger variety of blocks, including ones with potentially improved mechanical performance due to stronger interlocking, we can apply the Escher Trick twice~\citep{DissTom}. For example, we may start with a square, deform it into two distinct fundamental domains (one placed above and one placed below the square), and then interpolate between them, see \Cref{fig:doubleEscher} for examples.
\Cref{subfig:abeille_double} shows an Abeille vault which can be constructed by applying the Escher Trick twice. Applying the deformation used for the Versatile Block twice to the square produces the block depicted in \Cref{subfig:doubleVersatile}. In \Cref{subfig:versatile_bisquare}, the square is deformed once using the Versatile deformation and once using the Bisquare deformation. The resulting block admits both a homogeneous assembly and a heterogeneous assembly when combined with the block constructed by two Versatile deformations.

\begin{figure}[h!]
    \centering
    \hspace{-2cm}
    \begin{subfigure}{0.32\textwidth}
        \centering
        \raisebox{10pt}{
        \includegraphics[scale=0.35]{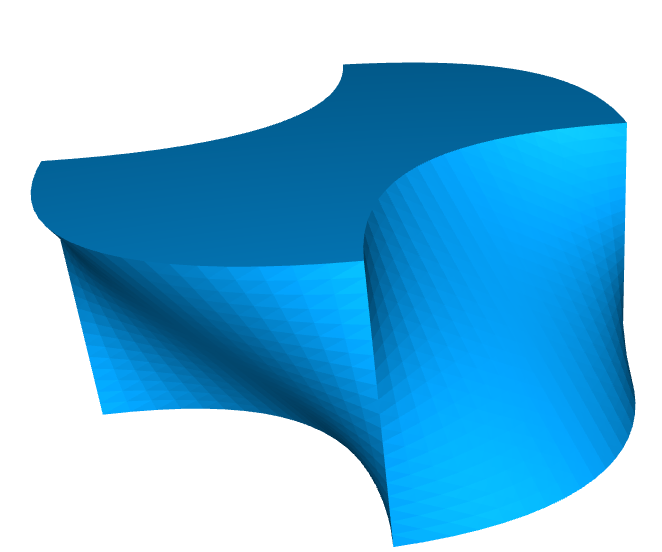}}
        \caption{}
        \label{subfig:abeille_double}
    \end{subfigure}
    \begin{subfigure}{0.32\textwidth}
        \centering
        \includegraphics[scale=0.4]{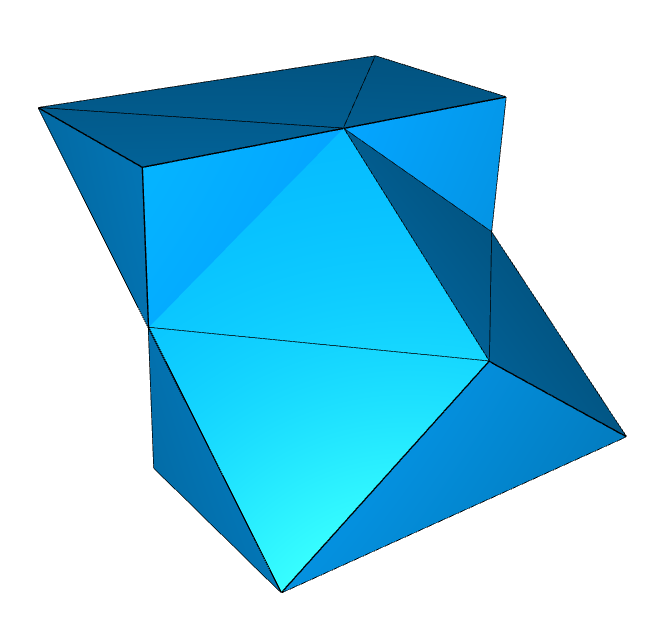}
        \caption{}
        \label{subfig:doubleVersatile}
    \end{subfigure}
    \begin{subfigure}{0.32\textwidth}
        \centering
        \raisebox{10pt}{
        \includegraphics[scale=0.6]{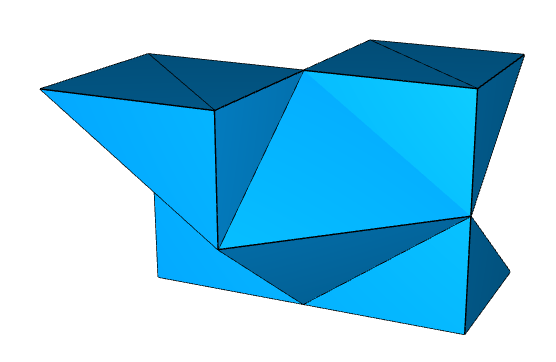}}
        \caption{}
        \label{subfig:versatile_bisquare}
    \end{subfigure}
    \caption{Blocks obtained from applying the Escher Trick twice based on the Abeille vault (a), the Versatile Block (b) and the Versatile Block and the Bisquare Block (c).}
    \label{fig:doubleEscher}
\end{figure}

All these possibilities—both in the design of individual blocks and in their arrangements—show that the choice of a topological interlocking configuration must be made with care.
Due to our experience we expect that the different blocks exhibit distinct mechanical behaviours, and the symmetries or asymmetries they possess play a crucial role in determining their mechanical performance. A detailed assessment of these effects will require dedicated multibody‑dynamics simulations and finite‑element analyses \citep{InfluenceArrangement}.

\section*{Acknowledgements}
We acknowledge the funding by the Deutsche Forschungsgemeinschaft (DFG, German Research Foundation) in the framework of the Collaborative Research Centre CRC/TRR 280 “Design Strategies for Material-Minimized Carbon Reinforced Concrete Structures – Principles of a New Approach to Construction” (project ID 417002380).

\bibliography{main.bib}

\end{document}

%% file: Example_pattern.tex
\begin{tikzpicture}[scale=1.]
\tikzset{knoten/.style={circle,fill=black,inner sep=0.2mm}}
\foreach \tx in {1,...,2}
    \foreach \ty in {1,...,2}
        {\begin{scope}[shift={(2*\tx,2*\ty)}]
            \foreach \x in {1,...,4}
                {\begin{scope}[rotate=90*\x]
                    \draw[-, thick] (0,0) to (0,1);
                    \draw[-, thick] (0,0) to (1,0);
                    \draw[-, thick] (1,1) to (0,1);
                    \draw[-, thick] (1,1) to (1,0);

                    \draw[-, thick] (0,0.5) to (0.5,0);

                    \draw (0.5,0.5) circle (0.2cm);
        \end{scope}}
    \end{scope}}

    \draw[-,red!60!black,very thick] (1,1) to (1,3);
    \draw[-,red!60!black,ultra thick] (1,1) to (3,1);
    \draw[-,red!60!black,ultra thick] (3,3) to (1,3);
    \draw[-,red!60!black,ultra thick] (3,3) to (3,1);

    \draw[dashed,cyan,ultra thick] (1,1) to (1,2);
    \draw[dashed,cyan,ultra thick] (1,1) to (2,1);
    \draw[dashed,cyan,ultra thick] (2,2) to (1,2);
    \draw[dashed,cyan,ultra thick] (2,2) to (2,1);
\end{tikzpicture}

%% file: Construction_Versatile.tex
\begin{minipage}{0.25\textwidth}
\begin{tikzpicture}[scale=1.5]
    \tikzset{knoten/.style={circle,fill=black,inner sep=0.5mm}}

    \draw[-,red!70!black,very thick] (0,0) to (0,1);
    \draw[-,red!70!black,very thick] (0,0) to (1,0);
    \draw[-,cyan,very thick] (1,1) to (0,1);
    \draw[-,cyan,very thick] (1,1) to (1,0);
    \draw [arrows = {-Stealth[length=10pt, inset=2pt]}] (1.5,0.5) -- (2.5,0.5);

    \node[knoten] (x) at (0,0) {};
    \node[knoten] (y) at (0,1) {};
    \node[knoten] (z) at (1,1) {};
    \node[knoten] (w) at (1,0) {};

    \node (a) at (1,-0.3) {$v_1$};
    \node (b) at (0,-0.3) {$v_2$};
    \node (c) at (1,1.25) {$v_4$};
    \node (d) at (0,1.25) {$v_3$};
\end{tikzpicture}
\end{minipage}
\begin{minipage}{0.3\textwidth}
\begin{tikzpicture}[scale=1.5]
    \tikzset{knoten/.style={circle,fill=black,inner sep=0.5mm}}

     \draw[-,cyan,very thick,dashed] (0.52,0.5) to (1,0.02);
     \draw[-,red!70!black,very thick,dashed] (0.48,0.5) to (0.98,0);

    \draw[-,red!70!black,very thick] (0,0) to (1,0);
    \draw[-,red!70!black,very thick] (0,0) to (0,1);
    \draw[-,cyan,very thick] (1,1) to (1,0);
    \draw[-,cyan,very thick] (1,1) to (0,1);

    \draw[-,cyan,very thick,dashed] (0.5,1.5) to (1,1);
    \draw[-,cyan,very thick,dashed] (0.5,1.5) to (0,1);
    \draw[-,cyan,very thick,dashed] (0.5,0.5) to (1,1);

    \draw[-,red!70!black,very thick,dashed] (0.5,0.5) to (0,0);
    \draw[-,red!70!black,very thick,dashed] (-0.5,0.5) to (0,0);
    \draw[-,red!70!black,very thick,dashed] (-0.5,0.5) to (0,1);
    \draw [arrows = {-Stealth[length=10pt, inset=2pt]}] (1.5,0.75) -- (2.5,0.75);

    \node[knoten] (x) at (0,0) {};
    \node[knoten] (y) at (0,1) {};
    \node[knoten] (z) at (1,1) {};
    \node[knoten] (w) at (1,0) {};

    \node[knoten] (k) at (-0.5,0.5) {};
    \node[knoten] (l) at (0.5,0.5) {};
    \node[knoten] (l) at (0.5,1.5) {};

    \node (a) at (1,-0.3) {$v_1$};
    \node (b) at (0,-0.3) {$v_2$};
    \node (c) at (1,1.25) {$v_4$};
    \node (d) at (0,1.25) {$v_3$};
    \node (e) at (0.3,0.5) {$p$};
\end{tikzpicture}
\end{minipage}
\begin{minipage}{0.3\textwidth}
\begin{tikzpicture}[scale=1.5]
    \foreach \x in {1,...,4}
    {\begin{scope}[rotate=90*\x]
         \draw[-,cyan,very thick,dashed] (0.52,0.5) to (1,0.02);
         \draw[-,red!70!black,very thick,dashed] (0.48,0.5) to (0.98,0);

        \draw[-,red!70!black,very thick] (0,0) to (1,0);
        \draw[-,red!70!black,very thick] (0,0) to (0,1);
        \draw[-,cyan,very thick] (1,1) to (1,0);
        \draw[-,cyan,very thick] (1,1) to (0,1);
    
        \draw[-,cyan,very thick,dashed] (0.5,1.5) to (1,1);
        \draw[-,cyan,very thick,dashed] (0.5,1.5) to (0,1);
        \draw[-,cyan,very thick,dashed] (0.5,0.5) to (1,1);
    
        \draw[-,red!70!black,very thick,dashed] (0.5,0.5) to (0,0);
        \draw[-,red!70!black,very thick,dashed] (-0.5,0.5) to (0,0);
        \draw[-,red!70!black,very thick,dashed] (-0.5,0.5) to (0,1);
    \end{scope}}
\end{tikzpicture}
\end{minipage}

%% file: bottom_left_tile.tex
\begin{tikzpicture}[scale=1.2]
    \fill[black] (0,0) -- (1,0) -- (0,1) -- cycle;

    \draw[-,very thick] (0,0) to (1,0);
    \draw[-,very thick] (1,0) to (1,1);
    \draw[-,very thick] (1,1) to (0,1);
    \draw[-,very thick] (0,1) to (0,0);
\end{tikzpicture}

%% file: bottom_right_tile.tex
\begin{tikzpicture}[scale=1.2]
    \fill[black] (0,0) -- (1,1) -- (1,0) -- cycle;

    \draw[-,very thick] (0,0) to (1,0);
    \draw[-,very thick] (1,0) to (1,1);
    \draw[-,very thick] (1,1) to (0,1);
    \draw[-,very thick] (0,1) to (0,0);
\end{tikzpicture}

%% file: top_right_tile.tex
\begin{tikzpicture}[scale=1.2]
    \fill[black] (1,0) -- (1,1) -- (0,1) -- cycle;

    \draw[-,very thick] (0,0) to (1,0);
    \draw[-,very thick] (1,0) to (1,1);
    \draw[-,very thick] (1,1) to (0,1);
    \draw[-,very thick] (0,1) to (0,0);
\end{tikzpicture}

%% file: top_left_tile.tex
\begin{tikzpicture}[scale=1.2]
    \fill[black] (0,0) -- (1,1) -- (0,1) -- cycle;

    \draw[-,very thick] (0,0) to (1,0);
    \draw[-,very thick] (1,0) to (1,1);
    \draw[-,very thick] (1,1) to (0,1);
    \draw[-,very thick] (0,1) to (0,0);
\end{tikzpicture}

%% file: Deformation.tex
\begin{tikzpicture}
    \tikzset{knoten/.style={circle,fill=gray,inner sep=0.8mm}}

    \draw[-,very thick] (-0.01,0) to (4.01,0);
    \draw[-,very thick, dashed] (0,0) to (2,2);
    \draw[-,very thick, dashed] (2,2) to (4,0);

    \node [knoten,label=below:$v_i$] (a) at (0,0) {};
    \node [knoten,label=below:$v_{i+1}$] (b) at (4,0) {};
    \node [knoten,label=$p$] (c) at (2,2) {};

    \node (d) at (2,-0.4) {$e_i$};
\end{tikzpicture}

%% file: Different_Deformations.tex
\begin{minipage}{0.2\textwidth}
\begin{tikzpicture}
    \draw[-,red!70!black,very thick] (0,0) to (0,1);
    \draw[-,red!70!black,very thick] (0,0) to (1,0);
    \draw[-,cyan,very thick] (1,1) to (0,1);
    \draw[-,cyan,very thick] (1,1) to (1,0);
    \draw [arrows = {-Stealth[length=10pt, inset=2pt]}] (1.5,0.8) -- (2.5,1.5);
    \draw [arrows = {-Stealth[length=10pt, inset=2pt]}] (1.5,0.2) -- (2.5,-0.5);

    \node (a) at (0.5,-0.3) {$e_1$};
    \node (b) at (-0.3,0.5) {$e_2$};
    \node (c) at (1.3,0.5) {$e_4$};
    \node (d) at (0.5,1.25) {$e_3$};

    \draw[-,cyan,very thick,dashed] (3.52,2.5) to (4,2.02);
    \draw[-,red!70!black,very thick,dashed] (3.48,2.5) to (3.98,2);

    \draw[-,red!70!black,very thick] (3,2) to (4,2);
    \draw[-,red!70!black,very thick] (3,2) to (3,3);
    \draw[-,cyan,very thick] (4,3) to (4,2);
    \draw[-,cyan,very thick] (4,3) to (3,3);

    \draw[-,cyan,very thick,dashed] (3.5,3.5) to (4,3);
    \draw[-,cyan,very thick,dashed] (3.5,3.5) to (3,3);
    \draw[-,cyan,very thick,dashed] (3.5,2.5) to (4,3);

    \draw[-,red!70!black,very thick,dashed] (3.5,2.5) to (3,2);
    \draw[-,red!70!black,very thick,dashed] (2.5,2.5) to (3,2);
    \draw[-,red!70!black,very thick,dashed] (2.5,2.5) to (3,3);

    \draw[-,red!70!black,very thick] (3,-2) to (3,-1);
    \draw[-,red!70!black,very thick] (3,-2) to (4,-2);
    \draw[-,cyan,very thick] (4,-1) to (3,-1);
    \draw[-,cyan,very thick] (4,-1) to (4,-2);

    \draw[-,cyan,very thick,dashed] (3.5,-1.5) to (4,-1);
    \draw[-,cyan,very thick,dashed] (3.5,-1.5) to (3,-1);
    \draw[-,cyan,very thick,dashed] (4.5,-1.5) to (4,-1);
    \draw[-,cyan,very thick,dashed] (4.5,-1.5) to (4,-2);

    \draw[-,red!70!black,very thick,dashed] (3.5,-1.5) to (3,-2);
    \draw[-,red!70!black,very thick,dashed] (3.5,-1.5) to (4,-2);
    \draw[-,red!70!black,very thick,dashed] (3,-2) to (2.5,-1.5);
    \draw[-,red!70!black,very thick,dashed] (2.5,-1.5) to (3,-1);
\end{tikzpicture}
\end{minipage}

%% file: top_bottom.tex
\begin{tikzpicture}[scale=1.4]
    \fill[black] (0,0) -- (0.5,0.5) -- (1,0) -- cycle;
    \fill[black] (1,1) -- (0.5,0.5) -- (0,1) -- cycle;

    \draw[-,very thick] (0,0) to (1,0);
    \draw[-,very thick] (1,0) to (1,1);
    \draw[-,very thick] (1,1) to (0,1);
    \draw[-,very thick] (0,1) to (0,0);
\end{tikzpicture}

%% file: left_right.tex
\begin{tikzpicture}[scale=1.4]
    \fill[black] (0,0) -- (0.5,0.5) -- (0,1) -- cycle;
    \fill[black] (1,1) -- (0.5,0.5) -- (1,0) -- cycle;

    \draw[-,very thick] (0,0) to (1,0);
    \draw[-,very thick] (1,0) to (1,1);
    \draw[-,very thick] (1,1) to (0,1);
    \draw[-,very thick] (0,1) to (0,0);
\end{tikzpicture}

%% file: 3colouring.tex
\definecolor{darkgreen}{rgb}{0,0.5,0}
\definecolor{darkblue}{rgb}{0,0,0.8}
\definecolor{darkred}{rgb}{0.8,0,0}

\begin{subfigure}{0.15\textwidth}
\centering
\begin{tikzpicture}[scale=1.5]
    \tikzset{knoten/.style={circle,fill=gray,inner sep=0.8mm}}
    \fill[black] (0,0) -- (1,1) -- (0,1) -- cycle;

    \node [knoten, darkgreen] (a) at (0,0) {};
    \node [knoten, darkblue] (b) at (1,0) {};
    \node [knoten, darkred] (c) at (1,1) {};
    \node [knoten, darkblue] (d) at (0,1) {};

    \draw[-,very thick] (a) to (b);
    \draw[-,very thick] (b) to (c);
    \draw[-,very thick] (c) to (d);
    \draw[-,very thick] (d) to (a);
\end{tikzpicture}
\caption{}
\label{subfig:a}
\end{subfigure}
\begin{subfigure}{0.15\textwidth}
\centering
\begin{tikzpicture}[scale=1.5]
    \tikzset{knoten/.style={circle,fill=gray,inner sep=0.8mm}}
        \fill[black] (0,0) -- (1,1) -- (1,0) -- cycle;
    
        \node [knoten, darkred] (a) at (0,0) {};
        \node [knoten, darkblue] (b) at (1,0) {};
        \node [knoten, darkgreen] (c) at (1,1) {};
        \node [knoten, darkblue] (d) at (0,1) {};
    
        \draw[-,very thick] (a) to (b);
        \draw[-,very thick] (b) to (c);
        \draw[-,very thick] (c) to (d);
        \draw[-,very thick] (d) to (a);
\end{tikzpicture}
\caption{}
\label{subfig:b}
\end{subfigure}
\begin{subfigure}{0.15\textwidth}
\centering
\begin{tikzpicture}[scale=1.5]
    \tikzset{knoten/.style={circle,fill=gray,inner sep=0.8mm}}
        \fill[black] (0,0) -- (1,0) -- (0,1) -- cycle;
    
        \node [knoten, darkgreen] (a) at (0,0) {};
        \node [knoten, darkred] (b) at (1,0) {};
        \node [knoten, darkgreen] (c) at (1,1) {};
        \node [knoten, darkblue] (d) at (0,1) {};
    
        \draw[-,very thick] (a) to (b);
        \draw[-,very thick] (b) to (c);
        \draw[-,very thick] (c) to (d);
        \draw[-,very thick] (d) to (a);
\end{tikzpicture}
\caption{}
\label{subfig:c}
\end{subfigure}
\begin{subfigure}{0.15\textwidth}
\centering
\begin{tikzpicture}[scale=1.5]
    \tikzset{knoten/.style={circle,fill=gray,inner sep=0.8mm}}
        \fill[black] (1,0) -- (1,1) -- (0,1) -- cycle;
    
        \node [knoten, darkred] (a) at (0,0) {};
        \node [knoten, darkgreen] (b) at (1,0) {};
        \node [knoten, darkred] (c) at (1,1) {};
        \node [knoten, darkblue] (d) at (0,1) {};
    
        \draw[-,very thick] (a) to (b);
        \draw[-,very thick] (b) to (c);
        \draw[-,very thick] (c) to (d);
        \draw[-,very thick] (d) to (a);
\end{tikzpicture}
\caption{}
\label{subfig:d}
\end{subfigure}
\begin{subfigure}{0.15\textwidth}
\centering
\begin{tikzpicture}[scale=1.5]
    \tikzset{knoten/.style={circle,fill=gray,inner sep=0.8mm}}

    \fill[black] (0,0) -- (0.5,0.5) -- (0,1) -- cycle;
    \fill[black] (1,1) -- (0.5,0.5) -- (1,0) -- cycle;

    \node [knoten, darkgreen] (a) at (0,0) {};
    \node [knoten, darkblue] (b) at (1,0) {};
    \node [knoten, darkgreen] (c) at (1,1) {};
    \node [knoten, darkblue] (d) at (0,1) {};

    \draw[-,very thick] (a) to (b);
    \draw[-,very thick] (b) to (c);
    \draw[-,very thick] (c) to (d);
    \draw[-,very thick] (d) to (a);

    \draw[-,very thick] (a) to (c);
    \draw[-,very thick] (b) to (d);
\end{tikzpicture}
\caption{}
\label{subfig:e}
\end{subfigure}
\begin{subfigure}{0.15\textwidth}
\centering
\begin{tikzpicture}[scale=1.5]
    \tikzset{knoten/.style={circle,fill=gray,inner sep=0.8mm}}

    \fill[black] (0,0) -- (0.5,0.5) -- (1,0) -- cycle;
    \fill[black] (1,1) -- (0.5,0.5) -- (0,1) -- cycle;

    \node [knoten, darkred] (a) at (0,0) {};
    \node [knoten, darkblue] (b) at (1,0) {};
    \node [knoten, darkred] (c) at (1,1) {};
    \node [knoten, darkblue] (d) at (0,1) {};

    \draw[-,very thick] (a) to (b);
    \draw[-,very thick] (b) to (c);
    \draw[-,very thick] (c) to (d);
    \draw[-,very thick] (d) to (a);

    \draw[-,very thick] (a) to (c);
    \draw[-,very thick] (b) to (d);
\end{tikzpicture}
\caption{}
\label{subfig:f}
\end{subfigure}

%% file: missed_colouring.tex
\definecolor{darkgreen}{rgb}{0,0.5,0}
\definecolor{darkblue}{rgb}{0,0,0.8}
\definecolor{darkred}{rgb}{0.8,0,0}
\begin{tikzpicture}[scale=1.5]
    \tikzset{knoten/.style={circle,fill=gray,inner sep=0.8mm}}

    \fill[black] (0,0) -- (0.5,0.5) -- (1,0) -- cycle;
    \fill[black] (1,1) -- (0.5,0.5) -- (0,1) -- cycle;

    \node [knoten, darkgreen] (a) at (0,0) {};
    \node [knoten, darkred] (b) at (1,0) {};
    \node [knoten, darkgreen] (c) at (1,1) {};
    \node [knoten, darkred] (d) at (0,1) {};

    \draw[-,very thick] (a) to (b);
    \draw[-,very thick] (b) to (c);
    \draw[-,very thick] (c) to (d);
    \draw[-,very thick] (d) to (a);

    \draw[-,very thick] (a) to (c);
    \draw[-,very thick] (b) to (d);
\end{tikzpicture}

%% file: tiling.tex
\definecolor{darkgreen}{rgb}{0,0.5,0}
\definecolor{darkblue}{rgb}{0,0,0.8}
\definecolor{darkred}{rgb}{0.8,0,0}

\begin{minipage}{0.99\textwidth}
\centering
\begin{tikzpicture}[scale=1.5]
    \tikzset{knoten/.style={circle,fill=gray,inner sep=0.8mm}}
    \fill[black] (0,0) -- (0.5,0.5) -- (1,0) -- cycle;
    \fill[black] (1,1) -- (0.5,0.5) -- (0,1) -- cycle;

    \node [knoten, darkblue] (a) at (0,0) {};
    \node [knoten, darkgreen] (b) at (1,0) {};
    \node [knoten, darkblue] (c) at (1,1) {};
    \node [knoten, darkgreen] (d) at (0,1) {};

    \draw[-,very thick] (a) to (b);
    \draw[-,very thick] (b) to (c);
    \draw[-,very thick] (c) to (d);
    \draw[-,very thick] (d) to (a);
\end{tikzpicture}
\hspace{0.2cm}
\begin{tikzpicture}[scale=1.5]
    \tikzset{knoten/.style={circle,fill=gray,inner sep=0.8mm}}
        \fill[black] (0,0) -- (0,1) -- (1,0) -- cycle;
    
        \node [knoten, darkblue] (a) at (0,0) {};
        \node [knoten, darkgreen] (b) at (1,0) {};
        \node [knoten, darkblue] (c) at (1,1) {};
        \node [knoten, darkred] (d) at (0,1) {};
    
        \draw[-,very thick] (a) to (b);
        \draw[-,very thick] (b) to (c);
        \draw[-,very thick] (c) to (d);
        \draw[-,very thick] (d) to (a);
\end{tikzpicture}
\hspace{0.2cm}
\begin{tikzpicture}[scale=1.5]
    \tikzset{knoten/.style={circle,fill=gray,inner sep=0.8mm}}
        \fill[black] (0,0) -- (1,0) -- (1,1) -- cycle;
    
        \node [knoten, darkblue] (a) at (0,0) {};
        \node [knoten, darkgreen] (b) at (1,0) {};
        \node [knoten, darkred] (c) at (1,1) {};
        \node [knoten, darkgreen] (d) at (0,1) {};
    
        \draw[-,very thick] (a) to (b);
        \draw[-,very thick] (b) to (c);
        \draw[-,very thick] (c) to (d);
        \draw[-,very thick] (d) to (a);
\end{tikzpicture}
\hspace{1.5cm}
\begin{tikzpicture}[scale=1.5]
    \tikzset{knoten/.style={circle,fill=gray,inner sep=0.8mm}}
        \fill[black] (0,0) -- (0.5,0.5) -- (0,1) -- cycle;
        \fill[black] (1,1) -- (0.5,0.5) -- (1,0) -- cycle;
    
        \node [knoten, darkgreen] (a) at (0,0) {};
        \node [knoten, darkblue] (b) at (1,0) {};
        \node [knoten, darkgreen] (c) at (1,1) {};
        \node [knoten, darkblue] (d) at (0,1) {};
    
        \draw[-,very thick] (a) to (b);
        \draw[-,very thick] (b) to (c);
        \draw[-,very thick] (c) to (d);
        \draw[-,very thick] (d) to (a);
\end{tikzpicture}
\hspace{0.2cm}
\begin{tikzpicture}[scale=1.5]
    \tikzset{knoten/.style={circle,fill=gray,inner sep=0.8mm}}

    \fill[black] (0,0) -- (1,0) -- (1,1) -- cycle;

    \node [knoten, darkred] (a) at (0,0) {};
    \node [knoten, darkblue] (b) at (1,0) {};
    \node [knoten, darkgreen] (c) at (1,1) {};
    \node [knoten, darkblue] (d) at (0,1) {};

    \draw[-,very thick] (a) to (b);
    \draw[-,very thick] (b) to (c);
    \draw[-,very thick] (c) to (d);
    \draw[-,very thick] (d) to (a);
\end{tikzpicture}
\hspace{0.2cm}
\begin{tikzpicture}[scale=1.5]
    \tikzset{knoten/.style={circle,fill=gray,inner sep=0.8mm}}

    \fill[black] (0,0) -- (0,1) -- (1,0) -- cycle;

    \node [knoten, darkgreen] (a) at (0,0) {};
    \node [knoten, darkred] (b) at (1,0) {};
    \node [knoten, darkgreen] (c) at (1,1) {};
    \node [knoten, darkblue] (d) at (0,1) {};

    \draw[-,very thick] (a) to (b);
    \draw[-,very thick] (b) to (c);
    \draw[-,very thick] (c) to (d);
    \draw[-,very thick] (d) to (a);
\end{tikzpicture}
\end{minipage}

%% file: Example.tex
\definecolor{darkgreen}{rgb}{0,0.5,0}
\definecolor{darkblue}{rgb}{0,0,0.8}
\definecolor{darkred}{rgb}{0.8,0,0}

\begin{tikzpicture}[scale=1.2]
    \tikzset{knoten/.style={circle,fill=gray,inner sep=0.8mm}}
    \fill[black] (0,0) -- (1,1) -- (1,0) -- cycle;
    \fill[black] (2,0) -- (2,1) -- (1,1) -- cycle;
    \fill[black] (2,0) -- (2.5,0.5) -- (3,0) -- cycle;
    \fill[black] (2,1) -- (2.5,0.5) -- (3,1) -- cycle;

    \fill[black] (0,1) -- (0.5,1.5) -- (1,1) -- cycle;
    \fill[black] (0,2) -- (0.5,1.5) -- (1,2) -- cycle;
    \fill[black] (1,1) -- (1,2) -- (2,2) -- cycle;
    \fill[black] (2,2) -- (3,2) -- (2,1) -- cycle;

    \fill[black] (0,2) -- (1,3) -- (0,3) -- cycle;
    \fill[black] (1,2) -- (1.5,2.5) -- (1,3) -- cycle;
    \fill[black] (2,3) -- (1.5,2.5) -- (2,2) -- cycle;
    \fill[black] (2,3) -- (3,3) -- (3,2) -- cycle;

    \node [knoten, darkred] (a0) at (0,0) {};
    \node [knoten, darkblue] (a1) at (1,0) {};
    \node [knoten, darkred] (a2) at (2,0) {};
    \node [knoten, darkblue] (a3) at (3,0) {};

    \node [knoten, darkblue] (b0) at (0,1) {};
    \node [knoten, darkgreen] (b1) at (1,1) {};
    \node [knoten, darkblue] (b2) at (2,1) {};
    \node [knoten, darkred] (b3) at (3,1) {};

    \node [knoten, darkgreen] (c0) at (0,2) {};
    \node [knoten, darkblue] (c1) at (1,2) {};
    \node [knoten, darkred] (c2) at (2,2) {};
    \node [knoten, darkgreen] (c3) at (3,2) {};

    \node [knoten, darkblue] (d0) at (0,3) {};
    \node [knoten, darkred] (d1) at (1,3) {};
    \node [knoten, darkblue] (d2) at (2,3) {};
    \node [knoten, darkred] (d3) at (3,3) {};

    \draw[-,very thick] (a0) to (a1);
    \draw[-,very thick] (a1) to (a2);
    \draw[-,very thick] (a2) to (a3);

    \draw[-,very thick] (b0) to (b1);
    \draw[-,very thick] (b1) to (b2);
    \draw[-,very thick] (b2) to (b3);

    \draw[-,very thick] (c0) to (c1);
    \draw[-,very thick] (c1) to (c2);
    \draw[-,very thick] (c2) to (c3);

    \draw[-,very thick] (d0) to (d1);
    \draw[-,very thick] (d1) to (d2);
    \draw[-,very thick] (d2) to (d3);

    \draw[-,very thick] (a0) to (b0);
    \draw[-,very thick] (b0) to (c0);
    \draw[-,very thick] (c0) to (d0);

    \draw[-,very thick] (a1) to (b1);
    \draw[-,very thick] (b1) to (c1);
    \draw[-,very thick] (c1) to (d1);

    \draw[-,very thick] (a2) to (b2);
    \draw[-,very thick] (b2) to (c2);
    \draw[-,very thick] (c2) to (d2);

    \draw[-,very thick] (a3) to (b3);
    \draw[-,very thick] (b3) to (c3);
    \draw[-,very thick] (c3) to (d3);
\end{tikzpicture}

%% file: ZigZagAssembly.tex
\begin{tikzpicture}[scale=1.4]
    \fill[black] (0,1) -- (0.5,0.5) -- (0.5,1) -- cycle;
    \fill[black] (1,0) -- (0.5,0.5) -- (1,0.5) -- cycle;
    \fill[black] (0.5,0) -- (0,0) -- (0,0.5) -- (0.5,0.5) -- cycle;

    \begin{scope}[shift={(1,1)},rotate=270]
       \fill[black] (0,1) -- (0.5,0.5) -- (0.5,1) -- cycle;
        \fill[black] (1,0) -- (0.5,0.5) -- (1,0.5) -- cycle;
        \fill[black] (0.5,0) -- (0,0) -- (0,0.5) -- (0.5,0.5) -- cycle;
    \end{scope}

    \begin{scope}[shift={(2,0)}]
        \fill[black] (0,1) -- (0.5,0.5) -- (0.5,1) -- cycle;
    \fill[black] (1,0) -- (0.5,0.5) -- (1,0.5) -- cycle;
    \fill[black] (0.5,0) -- (0,0) -- (0,0.5) -- (0.5,0.5) -- cycle;
    \end{scope}

    \begin{scope}[shift={(0,-1)}]
        \fill[black] (0,0) -- (0.5,0) -- (0.5,0.5) -- cycle;
        \fill[black] (0,1) -- (0,0.5) -- (0.5,0.5) -- cycle;
        \fill[black] (1,1) -- (0.5,1) -- (0.5,0.5) -- cycle;
        \fill[black] (1,0) -- (1,0.5) -- (0.5,0.5) -- cycle;
    \end{scope}
    \begin{scope}[shift={(1,-1)}]
        \fill[black] (0,0) -- (0.5,0) -- (0.5,0.5) -- cycle;
        \fill[black] (0,1) -- (0,0.5) -- (0.5,0.5) -- cycle;
        \fill[black] (1,1) -- (0.5,1) -- (0.5,0.5) -- cycle;
        \fill[black] (1,0) -- (1,0.5) -- (0.5,0.5) -- cycle;
    \end{scope}
    \begin{scope}[shift={(2,-1)}]
        \fill[black] (0,0) -- (0.5,0) -- (0.5,0.5) -- cycle;
        \fill[black] (0,1) -- (0,0.5) -- (0.5,0.5) -- cycle;
        \fill[black] (1,1) -- (0.5,1) -- (0.5,0.5) -- cycle;
        \fill[black] (1,0) -- (1,0.5) -- (0.5,0.5) -- cycle;
    \end{scope}

    \begin{scope}[shift={(1,-2)},rotate=90]
        \fill[black] (0,0) -- (0.5,0.5) -- (0,0.5) -- cycle;
        \fill[black] (1,1) -- (0.5,0.5) -- (0.5,1) -- cycle;
        \fill[black] (0.5,0) -- (1,0) -- (1,0.5) -- (0.5,0.5) -- cycle;
    \end{scope}

    \begin{scope}[shift={(1,-2)}]
        \fill[black] (0,0) -- (0.5,0.5) -- (0,0.5) -- cycle;
        \fill[black] (1,1) -- (0.5,0.5) -- (0.5,1) -- cycle;
        \fill[black] (0.5,0) -- (1,0) -- (1,0.5) -- (0.5,0.5) -- cycle;
    \end{scope}

    \begin{scope}[shift={(3,-2)},rotate=90]
        \fill[black] (0,0) -- (0.5,0.5) -- (0,0.5) -- cycle;
        \fill[black] (1,1) -- (0.5,0.5) -- (0.5,1) -- cycle;
        \fill[black] (0.5,0) -- (1,0) -- (1,0.5) -- (0.5,0.5) -- cycle;
    \end{scope}

    \draw[-,very thick] (0,1) to (3,1);
    \draw[-,very thick] (0,1) to (0,-2);
    \draw[-,very thick] (3,1) to (3,-2);
    \draw[-,very thick] (0,-2) to (3,-2);
\end{tikzpicture}

%% file: Different_Deformations_Lozenge.tex
\begin{minipage}{0.2\textwidth}
\begin{tikzpicture}[scale=1.2]
    \begin{scope}[rotate=60]
    \draw[-,red!70!black,very thick] (0,0) to (0.5,0.866);
    \draw[-,cyan,very thick] (0.5,0.866) to (1,0);
    \draw[-,cyan,very thick] (1,0) to (0.5,-0.866);
    \draw[-,red!70!black,very thick] (0.5,-0.866) to (0,0);
    \end{scope}
    
    \draw [arrows = {-Stealth[length=10pt, inset=2pt]}] (1.5,0.8) -- (2.5,1.5);
    \draw [arrows = {-Stealth[length=10pt, inset=2pt]}] (1.5,0.2) -- (2.5,-0.5);

    \begin{scope}[shift={(0.5,-0.5)}]
    \begin{scope}[rotate around={60:(3,2)}]
    \draw[-,red!70!black,very thick] (3,2) to (3.5,2.866);
    \draw[-,cyan,very thick] (3.5,2.866) to (4,2);
    \draw[-,cyan,very thick] (4,2) to (3.5,-0.866+2);
    \draw[-,red!70!black,very thick] (3.5,-0.866+2) to (3,2);
    
    \draw[-,red!70!black,very thick,dashed] (3,2) to (3.5,2+0.866/3);
    \draw[-,red!70!black,very thick,dashed] (3.48,2+0.866/3) to (3.48,2.866);
    \draw[-,red!70!black,very thick,dashed] (3.5,-0.866+2) to (3,-2*0.866/3+2);
    \draw[-,red!70!black,very thick,dashed] (3,-2*0.866/3+2) to (3,2);

    \draw[-,cyan,very thick,dashed] (3.5,2+0.866/3) to (4,2);
    \draw[-,cyan,very thick,dashed] (3.52,2+0.866/3) to (3.52,2.866);
    \draw[-,cyan,very thick,dashed] (4,2) to (4,-2*0.866/3+2);
    \draw[-,cyan,very thick,dashed] (4,-2*0.866/3+2) to (3.5,-0.866+2);
    \end{scope}
    \end{scope}
    
    \begin{scope}[shift={(0.5,0.5)}]
    \begin{scope}[rotate around={60:(3,-2)}]
    \draw[-,red!70!black,very thick] (3,-2) to (3.5,0.866-2);
    \draw[-,cyan,very thick] (3.5,0.866-2) to (4,-2);
    \draw[-,cyan,very thick] (4,-2) to (3.5,-2.866);
    \draw[-,red!70!black,very thick] (3.5,-2.866) to (3,-2);

    \draw[-,cyan,very thick,dashed] (3.5,-2+0.866/3) to (4,-2);
    \draw[-,cyan,very thick,dashed] (3.5,-2+0.866/3) to (3.5,0.866-2);
    \draw[-,cyan,very thick,dashed] (4,-2) to (4,-2*0.866/3-2);
    \draw[-,cyan,very thick,dashed] (4,-2*0.866/3-2) to (3.5,-0.866-2);

    \draw[-,red!70!black,very thick,dashed] (3,-2) to (3,-2+2*0.866/3);
    \draw[-,red!70!black,very thick,dashed] (3,-2+2*0.866/3) to (3.5,0.866-2);
    
    \draw[-,red!70!black,very thick,dashed] (3.5,-0.866-2) to (3.5,-0.866/3-2);
    \draw[-,red!70!black,very thick,dashed] (3.5,-0.866/3-2) to (3,-2);
    \end{scope}
    \end{scope}
\end{tikzpicture}
\end{minipage}

%% file: Lozenge_Rhom.tex
\begin{tikzpicture}[scale=1.5]
    \tikzset{knoten/.style={circle,fill=gray,inner sep=0.8mm}}

    \begin{scope}[rotate=60]
        \fill[black] (0,0) -- (1,0) -- (0.5,0.866) -- cycle;
    
        \draw[-,very thick] (0,0) to (0.5,0.866);
        \draw[-,very thick] (0.5,0.866) to (1,0);
        \draw[-,very thick] (1,0) to (0.5,-0.866);
        \draw[-,very thick] (0.5,-0.866) to (0,0);
    \end{scope}
\end{tikzpicture}

%% file: Lozenge_p3.tex
\begin{tikzpicture}[scale=1.5]
    \tikzset{knoten/.style={circle,fill=gray,inner sep=0.8mm}}

    \begin{scope}[rotate=60]
        \fill[black] (0.5,0) -- (0,0) -- (0.5,0.866) -- cycle;
        \fill[black] (0.5,0) -- (1,0) -- (0.5,-0.866) -- cycle;
    
        \draw[-,very thick] (0,0) to (0.5,0.866);
        \draw[-,very thick] (0.5,0.866) to (1,0);
        \draw[-,very thick] (1,0) to (0.5,-0.866);
        \draw[-,very thick] (0.5,-0.866) to (0,0);
    \end{scope}
\end{tikzpicture}

%% file: LozengeTiling1.tex
\begin{tikzpicture}[scale=1.2]
    \tikzset{knoten/.style={circle,fill=gray,inner sep=0.8mm}}

    \draw[very thick] (-0.5,0.866) -- (0,1.732);
    \draw[very thick] (0,1.732) -- (0.5,0.866);
    \draw[very thick] (0.5,0.866) -- (0,0);
    \draw[very thick] (-0.5,0.866) -- (0,0);
    
    \fill[black] (0,1.732) -- (-0.5,0.866) -- (0.5,0.866) -- cycle;
    
    \draw[very thick] (-1.0,0.0) -- (-1.5,0.866);
    \draw[very thick] (-1.5,0.866) -- (-0.5,0.866);
    \draw[very thick] (-0.5,0.866) -- (0,0);
    \draw[very thick] (-1.0,0.0) -- (0,0);

    \fill[black] (0,0) -- (-0.5,0.866) -- (-0.75,0.433) -- cycle;
    \fill[black] (-1,0) -- (-1.5,0.866) -- (-0.75,0.433) -- cycle;
    
    \draw[very thick] (-0.5,-0.866) -- (-1.5,-0.866);
    \draw[very thick] (-1.5,-0.866) -- (-1.0,0.0);
    \draw[very thick] (-1.0,0.0) -- (0,0);
    \draw[very thick] (-0.5,-0.866) -- (0,0);

    \fill[black] (0,0) -- (-1,0) -- (-0.75,-0.433) -- cycle;
    \fill[black] (-0.5,-0.866) -- (-1.5,-0.866) -- (-0.75,-0.433) -- cycle;
    
    \draw[very thick] (0.5,-0.866) -- (0,-1.732);
    \draw[very thick] (0,-1.732) -- (-0.5,-0.866);
    \draw[very thick] (-0.5,-0.866) -- (0,0);
    \draw[very thick] (0.5,-0.866) -- (0,0);

    \fill[black] (0,0) -- (0,-0.866) -- (-0.5,-0.866) -- cycle;
    \fill[black] (0,-1.732) -- (0,-0.866) -- (0.5,-0.866) -- cycle;
    
    \draw[very thick] (1.0,0.0) -- (1.5,-0.866);
    \draw[very thick] (1.5,-0.866) -- (0.5,-0.866);
    \draw[very thick] (0.5,-0.866) -- (0,0);
    \draw[very thick] (1.0,0.0) -- (0,0);

    \fill[black] (0,0) -- (0.5,-0.866) -- (0.75,-0.433) -- cycle;
    \fill[black] (1,0) -- (1.5,-0.866) -- (0.75,-0.433) -- cycle;
    
    \draw[very thick] (0.5,0.866) -- (1.5,0.866);
    \draw[very thick] (1.5,0.866) -- (1.0,0.0);
    \draw[very thick] (1.0,0.0) -- (0,0);
    \draw[very thick] (0.5,0.866) -- (0,0);

    \fill[black] (0,0) -- (1,0) -- (0.5,0.866) -- cycle;

    \draw[-,very thick] (1,0) to (1.5,0.866);
    \draw[-,very thick] (1.5,0.866) to (2,0);
    \draw[-,very thick] (2,0) to (1.5,-0.866);
    \draw[-,very thick] (1,0) to (1.5,-0.866);

    \fill[black] (2,0) -- (1.5,0) -- (1.5,-0.866) -- cycle;
    \fill[black] (1,0) -- (1.5,0) -- (1.5,0.866) -- cycle;

    \draw[-,very thick] (-2,0) to (-1.5,0.866);
    \draw[-,very thick] (-1.5,0.866) to (-1,0);
    \draw[-,very thick] (-1,0) to (-1.5,-0.866);
    \draw[-,very thick] (-2,0) to (-1.5,-0.866);

    \fill[black] (-1,0) -- (-2,0) -- (-1.5,-0.866) -- cycle;

    \draw[very thick] (-1,-2*0.866) -- (-1.5,-0.866);
    \draw[very thick] (-1.5,-0.866) -- (-0.5,-0.866);
    \draw[very thick] (-0.5,-0.866) -- (0,-2*0.866);
    \draw[very thick] (-1,-2*0.866) -- (0,-2*0.866);

    \fill[black] (0,-2*0.866) -- (-0.5,-0.866) -- (-1,-2*0.866) -- cycle;

    \draw[very thick] (-0.5,0.866) -- (-1.5,0.866);
    \draw[very thick] (-1.5,0.866) -- (-1.0,2*0.866);
    \draw[very thick] (-1,2*0.866) -- (0,2*0.866);
    \draw[very thick] (-0.5,0.866) -- (0,2*0.866);

    \fill[black] (-1.5,0.866) -- (-1,2*0.866) -- (-0.5,0.866) -- cycle;

    \draw[very thick] (1,2*0.866) -- (1.5,0.866);
    \draw[very thick] (1.5,0.866) -- (0.5,0.866);
    \draw[very thick] (0.5,0.866) -- (0,2*0.866);
    \draw[very thick] (1,2*0.866) -- (0,2*0.8660);

    \fill[black] (0.5,0.866) -- (1.5,0.866) -- (1,2*0.866) -- cycle;

    \draw[very thick] (0.5,0.866-2*0.866) -- (1.5,0.866-2*0.866);
    \draw[very thick] (1.5,0.866-2*0.866) -- (1,-2*0.866);
    \draw[very thick] (1,-2*0.866) -- (0,-2*0.866);
    \draw[very thick] (0.5,-0.866) -- (0,-2*0.866);

    \fill[black] (1.5,0.866-2*0.866) -- (1,-2*0.866) -- (0.5,-0.866) -- cycle;
\end{tikzpicture}

%% file: Different_Deformations_Kite.tex
\begin{minipage}{0.2\textwidth}
\begin{tikzpicture}
    \draw[-,red!70!black,very thick] (0,0) to (-0.866,1.5);
    \draw[-,cyan,very thick] (-0.866,1.5) to (0,2);
    \draw[-,cyan,very thick] (0,2) to (0.866,1.5);
    \draw[-,red!70!black,very thick] (0.866,1.5) to (0,0);
    
    \draw [arrows = {-Stealth[length=10pt, inset=2pt]}] (1.5,0.8) -- (2.5,1.5);
    \draw [arrows = {-Stealth[length=10pt, inset=2pt]}] (1.5,0.2) -- (2.5,-0.5);

    \begin{scope}[shift={(4,1.5)}]
        \draw[-,red!70!black,very thick] (0,0) to (-0.866,1.5);
        \draw[-,cyan,very thick] (-0.866,1.5) to (0,2);
        \draw[-,cyan,very thick] (0,2) to (0.866,1.5);
        \draw[-,red!70!black,very thick] (0.866,1.5) to (0,0);

        \draw[-,red!70!black,very thick,dashed] (0.866-0.02,1.5-0.02) to (0-0.02,1-0.02);
        \draw[-,red!70!black,very thick,dashed] (0,0) to (0,1);
        \draw[-,red!70!black,very thick,dashed] (0,0) to (-0.866,0.5);
        \draw[-,red!70!black,very thick,dashed] (-0.866,1.5) to (-0.866,0.5);
    
        \draw[-,cyan,very thick,dashed] (0.866+0.02,1.5+0.02) to (0+0.02,1+0.02);
        \draw[-,cyan,very thick,dashed] (0,2) to (0,1);
        
        \draw[-,cyan,very thick,dashed] (0,2) to (-0.866,2.5);
        \draw[-,cyan,very thick,dashed] (-0.866,1.5) to (-0.866,2.5);
    \end{scope}
    
    \begin{scope}[shift={(4,-1.5)}]
        \draw[-,red!70!black,very thick] (0,0) to (-0.866,1.5);
        \draw[-,cyan,very thick] (-0.866,1.5) to (0,2);
        \draw[-,cyan,very thick] (0,2) to (0.866,1.5);
        \draw[-,red!70!black,very thick] (0.866,1.5) to (0,0);

        \draw[-,red!70!black,very thick,dashed] (0.866,1.5) to (0,1);
        \draw[-,red!70!black,very thick,dashed] (0,0) to (0,1);
        \draw[-,red!70!black,very thick,dashed] (0,0) to (-0.866,0.5);
        \draw[-,red!70!black,very thick,dashed] (-0.866,1.5) to (-0.866,0.5);
    
        \draw[-,cyan,very thick,dashed] (-0.866,1.5) to (0,1);
        \draw[-,cyan,very thick,dashed] (0,2) to (0,1);
        
        \draw[-,cyan,very thick,dashed] (0,2) to (0.866,2.5);
        \draw[-,cyan,very thick,dashed] (0.866,1.5) to (0.866,2.5);
    \end{scope}
\end{tikzpicture}
\end{minipage}

%% file: versatile_p6_tile.tex
\begin{tikzpicture}
    \tikzset{knoten/.style={circle,fill=gray,inner sep=0.8mm}}
    \begin{scope}[rotate=270]
        \draw[-,very thick] (0,0) to (-0.866,1.5);
        \draw[-,very thick] (-0.866,1.5) to (0,2);
        \draw[-,very thick] (0,2) to (0.866,1.5);
        \draw[-,very thick] (0.866,1.5) to (0,0);
    
        \fill[black] (0,0) -- (0,2) -- (0.866,1.5) -- cycle;
    \end{scope}
\end{tikzpicture}

%% file: Bilozenge_tile.tex
\begin{tikzpicture}
    \tikzset{knoten/.style={circle,fill=gray,inner sep=0.8mm}}
    \begin{scope}[rotate=270]
        \draw[-,very thick] (0,0) to (-0.866,1.5);
        \draw[-,very thick] (-0.866,1.5) to (0,2);
        \draw[-,very thick] (0,2) to (0.866,1.5);
        \draw[-,very thick] (0.866,1.5) to (0,0);
    
        \fill[black] (0,0) -- (0,1) -- (0.866,1.5) -- cycle;
        \fill[black] (0,2) -- (0,1) -- (-0.866,1.5) -- cycle;
    \end{scope}
\end{tikzpicture}

%% file: Different_Deformations_p1.tex
\begin{minipage}{0.2\textwidth}
\begin{tikzpicture}
    \begin{scope}[scale=1.2]
        \draw[-,red!70!black,very thick] (0,0) to (0,1);
        \draw[-,cyan,very thick] (0,0) to (1,0);
        \draw[-,cyan,very thick] (1,1) to (0,1);
        \draw[-,red!70!black,very thick] (1,1) to (1,0);
    \end{scope}

    \draw [arrows = {-Stealth[length=10pt, inset=2pt]}] (1.5,0.8) -- (2.5,1.5);
    \draw [arrows = {-Stealth[length=10pt, inset=2pt]}] (1.5,0.2) -- (2.5,-0.5);

    \begin{scope}[scale=1.5,shift={(-1,-1)}]
    \draw[-,cyan,very thick] (3,2) to (4,2);
    \draw[-,red!70!black,very thick] (3,2) to (3,3);
    \draw[-,red!70!black,very thick] (4,3) to (4,2);
    \draw[-,cyan,very thick] (4,3) to (3,3);

    \draw[-,cyan,very thick,dashed] (3.25+0.02,2.25) to (3+0.02,2);
    \draw[-,cyan,very thick,dashed] (3.25,2.25) to (3.75,1.75);
    \draw[-,cyan,very thick,dashed] (4,2) to (3.75,1.75);
    \draw[-,cyan,very thick,dashed] (3.25,3.25) to (3,3);
    \draw[-,cyan,very thick,dashed] (3.25,3.25) to (3.75,2.75);
    \draw[-,cyan,very thick,dashed] (4-0.02,3) to (3.75-0.02,2.75);

    \draw[-,red!70!black,very thick,dashed] (3.25,2.25+0.02) to (3,2+0.02);
    \draw[-,red!70!black,very thick,dashed] (3.25,2.25) to (2.75,2.75);
    \draw[-,red!70!black,very thick,dashed] (3,3) to (2.75,2.75);
    \draw[-,red!70!black,very thick,dashed] (4.25,2.25) to (4,2);
    \draw[-,red!70!black,very thick,dashed] (4.25,2.25) to (3.75,2.75);
    \draw[-,red!70!black,very thick,dashed] (4,3-0.02) to (3.75,2.75-0.02);
    \end{scope}

    \begin{scope}[scale=1.5,shift={(-1,1)}]
    \draw[-,red!70!black,very thick] (3,-2) to (3,-1);
    \draw[-,cyan,very thick] (3,-2) to (4,-2);
    \draw[-,cyan,very thick] (4,-1) to (3,-1);
    \draw[-,red!70!black,very thick] (4,-1) to (4,-2);

    \draw[-,cyan,very thick,dashed] (3.25,2.25-4) to (3,2-4);
    \draw[-,cyan,very thick,dashed] (3.25,2.25-4) to (3.75,1.75-4);
    \draw[-,cyan,very thick,dashed] (4,2-4) to (3.75,1.75-4);
    \draw[-,cyan,very thick,dashed] (3.25,3.25-4) to (3,3-4);
    \draw[-,cyan,very thick,dashed] (3.25,3.25-4) to (3.75,2.75-4);
    \draw[-,cyan,very thick,dashed] (4,3-4) to (3.75,2.75-4);

    \draw[-,red!70!black,very thick,dashed] (2.75,2.25-4) to (3,2-4);
    \draw[-,red!70!black,very thick,dashed] (2.75,2.25-4) to (3.25,2.75-4);
    \draw[-,red!70!black,very thick,dashed] (3,3-4) to (3.25,2.75-4);
    \draw[-,red!70!black,very thick,dashed] (3.75,2.25-4) to (4,2-4);
    \draw[-,red!70!black,very thick,dashed] (3.75,2.25-4) to (4.25,2.75-4);
    \draw[-,red!70!black,very thick,dashed] (4,3-4) to (4.25,2.75-4);
    \end{scope}
\end{tikzpicture}
\end{minipage}

%% file: pg_deformation.tex
\begin{minipage}{0.2\textwidth}
\begin{tikzpicture}[scale=1.4]
        \draw[-,red!70!black,very thick] (0,0) to (0,1);
        \draw[-,cyan,very thick] (0,0) to (1,0);
        \draw[-,cyan,very thick] (1,1) to (0,1);
        \draw[-,red!70!black,very thick] (1,1) to (1,0);

    \draw [arrows = {-Stealth[length=10pt, inset=2pt]}] (1.5,0.5) -- (2.,0.5);

    \begin{scope}[shift={(-0.5,-2)}]
    \draw[-,cyan,very thick] (3,2) to (4,2);
    \draw[-,red!70!black,very thick] (3,2) to (3,3);
    \draw[-,red!70!black,very thick] (4,3) to (4,2);
    \draw[-,cyan,very thick] (4,3) to (3,3);

    \draw[-,cyan,very thick,dashed] (3.25,2.25) to (3,2);
    \draw[-,cyan,very thick,dashed] (3.25,2.25) to (3.75,1.75);
    \draw[-,cyan,very thick,dashed] (4,2) to (3.75,1.75);
    \draw[-,cyan,very thick,dashed] (3.25+0.02,2.75) to (3+0.02,3);
    \draw[-,cyan,very thick,dashed] (3.25,2.75) to (3.75,3.25);
    \draw[-,cyan,very thick,dashed] (4,3) to (3.75,3.25);

    \draw[-,red!70!black,very thick,dashed] (2.75,2.25) to (3,2);
    \draw[-,red!70!black,very thick,dashed] (2.75,2.25) to (3.25,2.75);
    \draw[-,red!70!black,very thick,dashed] (3,3-0.02) to (3.25,2.75-0.02);
    \draw[-,red!70!black,very thick,dashed] (3.75,2.25) to (4,2);
    \draw[-,red!70!black,very thick,dashed] (3.75,2.25) to (4.25,2.75);
    \draw[-,red!70!black,very thick,dashed] (4,3) to (4.25,2.75);
    \end{scope}
    
\end{tikzpicture}
\end{minipage}

%% file: p2gg_deformation.tex
\begin{minipage}{0.2\textwidth}
\begin{tikzpicture}[scale=1.3]
        \draw[-,red!70!black,very thick] (0,0) to (0,1);
        \draw[-,cyan,very thick] (0,0) to (1,0);
        \draw[-,cyan,very thick] (1,1) to (0,1);
        \draw[-,red!70!black,very thick] (1,1) to (1,0);

    \draw [arrows = {-Stealth[length=10pt, inset=2pt]}] (1.5,0.5) -- (2.,0.5);

    \begin{scope}[shift={(-0.6,-2)}]
    \draw[-,cyan,very thick] (3,2) to (4,2);
    \draw[-,red!70!black,very thick] (3,2) to (3,3);
    \draw[-,red!70!black,very thick] (4,3) to (4,2);
    \draw[-,cyan,very thick] (4,3) to (3,3);

    \draw[-,cyan,very thick,dashed] (3.5,3.25) to (4,3);
    \draw[-,cyan,very thick,dashed] (3.5,3.25) to (3,3);
    \draw[-,cyan,very thick,dashed] (3.52,2.25) to (4,2.02);
    \draw[-,cyan,very thick,dashed] (3.5,2.25) to (3,2);

    \draw[-,red!70!black,very thick,dashed] (4.25,2) to (4,2);
    \draw[-,red!70!black,very thick,dashed] (4.25,2) to (4,3);
    \draw[-,red!70!black,very thick,dashed] (3.25,3) to (3,2);
    \draw[-,red!70!black,very thick,dashed] (3.25,3) to (3,3);
    \end{scope}
\end{tikzpicture}
\end{minipage}

%% file: DeformationRhom.tex
\begin{minipage}{0.2\textwidth}
    \begin{tikzpicture}[scale=0.4]
        \draw[-,very thick] (-2.732,0.732) to (0.732,2.732);
        \draw[-,very thick] (0.732,2.732) to (2.732,-0.732);
        \draw[-,very thick] (2.732,-0.732) to (-0.732,-2.732);
        \draw[-,very thick] (-0.732,-2.732) to (-2.732,0.732);

        \begin{scope}[shift={(-2.732,0.732+2)}]
            \draw[-,very thick] (-1.732*2,0) to (0,2);
            \draw[-,very thick] (0,2) to (1.732*2,0);
            \draw[-,very thick] (1.732*2,0) to (0,-2);
            \draw[-,very thick] (-1.732*2,0) to (0,-2);

            \draw[-,cyan,very thick,dashed] (1.155-0.03,0) to (-0.03,-2);
            \draw[-,cyan,very thick,dashed] (1.155,0) to (0,2);
            \draw[-,cyan,very thick,dashed] (1.155,0.03) to (1.732*2,0.03);

            \draw[-,cyan,very thick,dashed] (-2.309,-2) to (0,-2);
            \draw[-,cyan,very thick,dashed] (-2.309,2) to (0,2);
            \draw[-,cyan,very thick,dashed] (-2.309,-2) to (-1.732*2,0);
            \draw[-,cyan,very thick,dashed] (-2.309,2) to (-1.732*2,0);

            \draw[-,red!70!black,very thick,dashed] (1.155,-0.03) to (2*1.732,-0.03);
            \draw[-,red!70!black,very thick,dashed] (1.155+0.03,0) to (0.03,-2);

            \begin{scope}[shift={(2,-2)},rotate=270]
                \draw[-,red!70!black,very thick,dashed] (1.155,0) to (2*1.732,0);
                \draw[-,red!70!black,very thick,dashed] (1.155,0) to (0,-2);
            \end{scope}

            \begin{scope}[shift={(2,-2)},rotate=270,shift={(-2,2*1.732)}]
                \draw[-,red!70!black,very thick,dashed] (1.155,0) to (2*1.732,0);
                \draw[-,red!70!black,very thick,dashed] (1.155,0) to (0,-2);
            \end{scope}

            \begin{scope}[shift={(2,-2*1.732)}]
                \draw[-,red!70!black,very thick,dashed] (1.155,0) to (2*1.732,0);
                \draw[-,red!70!black,very thick,dashed] (1.155,0) to (0,-2);
            \end{scope}

        \end{scope}
    \end{tikzpicture}
\end{minipage}

%% file: DeformationVersatile.tex
\begin{minipage}{0.2\textwidth}
    \begin{tikzpicture}[scale=0.4]
        \draw[-,very thick] (-2.732,0.732) to (0.732,2.732);
        \draw[-,very thick] (0.732,2.732) to (2.732,-0.732);
        \draw[-,very thick] (2.732,-0.732) to (-0.732,-2.732);
        \draw[-,very thick] (-0.732,-2.732) to (-2.732,0.732);

        \begin{scope}[shift={(-2.732,0.732+2)}]
            \draw[-,very thick] (-1.732*2,0) to (0,2);
            \draw[-,very thick] (0,2) to (1.732*2,0);
            \draw[-,very thick] (1.732*2,0) to (0,-2);
            \draw[-,very thick] (-1.732*2,0) to (0,-2);

            \draw[-,cyan,very thick,dashed] (0.732-0.02,0.732+0.02) to (-0.05,-2+0.02);
            \draw[-,cyan,very thick,dashed] (0.732,0.732+0.02) to (1.732*2,0.02);

            \begin{scope}[shift={(0,2)},rotate=120,shift={(-3.5,3.9)}]
                \draw[-,cyan,very thick,dashed] (0.732,0.732) to (0,-2);
                \draw[-,cyan,very thick,dashed] (0.732,0.732) to (1.732*2,0);
            \end{scope}

            \begin{scope}[shift={(0,2)},rotate=180,shift={(0,2)}]
                \draw[-,cyan,very thick,dashed] (0.732,0.732) to (0,-2);
                \draw[-,cyan,very thick,dashed] (0.732,0.732) to (1.732*2,0);
            \end{scope}

            \begin{scope}[shift={(0,2)},rotate=300,shift={(0,2)}]
                \draw[-,cyan,very thick,dashed] (0.732,0.732) to (0,-2);
                \draw[-,cyan,very thick,dashed] (0.732,0.732) to (1.732*2,0);
            \end{scope}

            \draw[-,red!70!black,very thick,dashed] (0.732,0.732-0.03) to (2*1.732,-0.03);
            \draw[-,red!70!black,very thick,dashed] (0.732+0.03,0.732) to (0,-2+0.03);

            \begin{scope}[shift={(2,-2)},rotate=270]
                \draw[-,red!70!black,very thick,dashed] (0.732,0.732) to (2*1.732,0);
                \draw[-,red!70!black,very thick,dashed] (0.732,0.732) to (0,-2);
            \end{scope}

            \begin{scope}[shift={(2,-2)},rotate=270,shift={(-2,2*1.732)}]
                \draw[-,red!70!black,very thick,dashed] (0.732,0.732) to (2*1.732,0);
                \draw[-,red!70!black,very thick,dashed] (0.732,0.732) to (0,-2);
            \end{scope}

            \begin{scope}[shift={(2,-2*1.732)}]
                \draw[-,red!70!black,very thick,dashed] (0.732,0.732) to (2*1.732,0);
                \draw[-,red!70!black,very thick,dashed] (0.732,0.732) to (0,-2);
            \end{scope}

        \end{scope}
    \end{tikzpicture}
\end{minipage}

%% file: main.bib
@inproceedings{bridges23,
  author      = {Akpanya, Reymond and Goertzen, Tom and Wiesenhuetter, Sebastian and Niemeyer, Alice C. and Noennig, J\"{o}rg},
  title       = {Topological Interlocking, Truchet Tiles and Self-Assemblies: A Construction-Kit for Civil Engineering Design},
  pages       = {61--68},
  booktitle   = {Proceedings of Bridges 2023: Mathematics, Art, Music, Architecture, Culture},
  year        = {2023},
  url         = {http://archive.bridgesmathart.org/2023/bridges2023-61.html}
}

@article{InfluenceArrangement,
title = {Influence of block arrangement on mechanical performance in topological interlocking assemblies: A study of the versatile block},
journal = {International Journal of Solids and Structures},
volume = {306},
pages = {113102},
year = {2025},
doi = {https://doi.org/10.1016/j.ijsolstr.2024.113102},
author = {Tom Goertzen and Domen Macek and Lukas Schnelle and Meike Weiß and Stefanie Reese and Hagen Holthusen and Alice C. Niemeyer}
}

@inproceedings{TopologicalSymmetry,
  title={Topological Interlocking via Symmetry},
  author={Goertzen, Tom and Niemeyer, Alice C. and Plesken, Wilhelm},
  editor={Stokkeland, S. and Braarud, H.C},
  booktitle={6th fib International Congress on Concrete Innovation for Sustainability, 2022; Oslo; Norway},
  pages={1235--1244},
  year={2022},
}

@book{armstrong1997groups,
    AUTHOR = {Armstrong, M. A.},
     TITLE = {Groups and symmetry},
    SERIES = {Undergraduate Texts in Mathematics},
 PUBLISHER = {Springer-Verlag, New York},
      YEAR = {1988},
     PAGES = {xii+186},
MRREVIEWER = {Peter\ M.\ Neumann},
       DOI = {10.1007/978-1-4757-4034-9}
}

@PHDTHESIS{DissTom,
      author       = {Goertzen, Tom},
      othercontributors = {Niemeyer, Alice Catherine and Robertz, Daniel},
      title        = {{C}onstructing simplicial surfaces with given geometric
                      constraints},
      school       = {RWTH Aachen University},
      type         = {Dissertation},
      publisher    = {RWTH Aachen University},
      year         = {2024},
      url          = {https://publications.rwth-aachen.de/record/992149},
}

@misc{OEIS_A078099,
  author       = {N. J. A. Sloane},
  title        = {Entry {A}078099 in {T}he {O}n-{L}ine {E}ncyclopedia of {I}nteger {S}equences},
  howpublished = {\url{https://oeis.org/A078099}}
}

@book {TilingsAndPatterns,
    AUTHOR = {Gr\"unbaum, Branko and Shephard, G. C.},
     TITLE = {Tilings and patterns},
    SERIES = {A Series of Books in the Mathematical Sciences},
      NOTE = {An introduction},
 PUBLISHER = {W. H. Freeman and Company, New York},
      YEAR = {1989},
     PAGES = {xii+446},
      ISBN = {0-7167-1998-3},
   MRCLASS = {52A45},
  MRNUMBER = {992195},
}

@article{Miura-ori,
  title={Counting Miura-ori Foldings},
  author={Jessica Ginepro and Thomas C. Hull},
  journal={J. Integer Seq.},
  year={2014},
  volume={17},
  pages={14.10.8},
  url={https://api.semanticscholar.org/CorpusID:7879729}
}

@book{InternationalTablesA2002,
  address = {Dordrecht, Boston, London},
  author = {IUCr},
  edition = {5. revised edition},
  publisher = {Kluwer Academic Publishers},
  series = {International Tables for Crystallography},
  title = {International Tables for Crystallography, Volume A: Space Group Symmetry},
  year = 2002
}

@article{Tubular,
title = {Innovative approach for designing topological interlocking bricks with precise morphological representation and controlled interface curvature},
journal = {Materials \& Design},
volume = {253},
pages = {113844},
year = {2025},
doi = {https://doi.org/10.1016/j.matdes.2025.113844},
author = {Maliheh {Tavoosi Gazkoh} and Xiaoshan Lin and Annan Zhou}
}

@article{WEIZMANN201618,
title = {Topological interlocking in buildings: A case for the design and construction of floors},
journal = {Automation in Construction},
volume = {72},
pages = {18-25},
year = {2016},
doi = {https://doi.org/10.1016/j.autcon.2016.05.014},
author = {Michael Weizmann and Oded Amir and Yasha Jacob Grobman}
}

@book{abeille_memoire_1735,
	title = {Machines et inventions approuvées par l’Académie Royale des Sciences depuis son établissement jusqu’à present; avec leur Description},
	publisher = {Académie royale des Sciences},
	author = {Gallon, J.-G.},
	year = {1735}
}

@article{dyskin_new_2001,
	title = {A new concept in design of materials and structures: assemblies of interlocked tetrahedron-shaped elements},
	volume = {44},
	doi = {https://doi.org/10.1016/S1359-6462(01)00968-X},
	number = {12},
	journal = {Scripta Materialia},
	author = {Dyskin, A. V. and Estrin, Y. and Kanel-Belov, A. J. and Pasternak, E.},
	year = {2001},
	pages = {2689--2694},
}

@article{dyskin_toughening_2001,
	title = {Toughening by {Fragmentation}—{How} {Topology} {Helps}},
	volume = {3},
	doi = {https://doi.org/10.1002/1527-2648(200111)3:11<885::AID-ADEM885>3.0.CO;2-P},
	number = {11},
	journal = {Advanced Engineering Materials},
	author = {Dyskin, A. V. and Estrin, Y. and Kanel-Belov, A. J. and Pasternak, E.},
	year = {2001},
	pages = {885--888}
}

@article{Dyskin2003,
author = {Dyskin, A.V. and Estrin, Y. and Pasternak, E. and Khor, H.C. and Kanel-Belov, A.J.},
title = {Fracture Resistant Structures Based on Topological Interlocking with Non-planar Contacts},
journal = {Advanced Engineering Materials},
volume = {5},
number = {3},
pages = {116-119},
doi = {https://doi.org/10.1002/adem.200390016},
year = {2003}
}

@article{DelaunayLofts,
title = {Delaunay Lofts: A biologically inspired approach for modeling space filling modular structures},
journal = {Computers \& Graphics},
volume = {82},
pages = {73-83},
year = {2019},
issn = {0097-8493},
doi = {https://doi.org/10.1016/j.cag.2019.05.021},
author = {Sai Ganesh Subramanian and Mathew Eng and Vinayak R. Krishnamurthy and Ergun Akleman}
}

@article{VoroNoodles,
author = {Ebert, Matthew and Akleman, Ergun and Krishnamurthy, Vinayak and Kulagin, Roman and Estrin, Yuri},
title = {VoroNoodles: Topological Interlocking with Helical Layered 2-Honeycombs},
journal = {Advanced Engineering Materials},
volume = {26},
number = {4},
pages = {2300831},
doi = {https://doi.org/10.1002/adem.202300831},
year = {2024}
}

@book{Gorin2021,
  author    = {Vadim Gorin},
  title     = {Lectures on Random Lozenge Tilings},
  series    = {Cambridge Studies in Advanced Mathematics},
  publisher = {Cambridge University Press},
  year      = {2021}
}

@book{frezier1737,
  author    = {Fr{\'e}zier, Am{\'e}d{\'e}e-Fran{\c{c}}ois},
  title     = {La th{\'e}orie et la pratique de la coupe des pierres et des bois pour la construction des voutes et autres parties des b{\^a}timents civils},
  year      = {1737},
  address   = {Paris},
  publisher = {Jean-Daniel Doulsseker},
}
